\title{On the Isotropy Groups of Non-Invertible Simple Derivations}
\author{Sumit Chandra Mishra}
\address{Indian Institute of Technology Indore, Simrol, Khandwa Road, Indore 453 552 India}
\email{sumitcmishra@gmail.com, sumitcmishra@iiti.ac.in}
\author{Dibyendu Mondal}
\address{Indian Institute of Technology Indore, Simrol, Khandwa Road, Indore 453 552 India}
\email{mdibyendu07@gmail.com, mdibyendu@iiti.ac.in}
\author{Pankaj Shukla}
\address{Indian Institute of Technology Indore, Simrol, Khandwa Road, Indore 453 552 India}
\email{Pankajshuklashvm23@gmail.com, phd2201141002@iiti.ac.in}
\date{\today}
\keywords{Simple derivations; Non-invertible derivations; Isotropy groups; Polynomial automorphisms}
\subjclass{13N15; 12H05; 13P05}
\renewcommand{\deg}{\mathsf{deg}}
\renewcommand{\setminus}{\smallsetminus}
\renewcommand{\leq}{\leqslant}
\renewcommand{\geq}{\geqslant}
\newtheorem*{thm*}{Theorem}
\newtheorem{thm}{Theorem}
\numberwithin{thm}{section}
\newtheorem{cor}[thm]{Corollary}
\newtheorem{conj}[thm]{Conjecture}
\newtheorem{lem}[thm]{Lemma}
\theoremstyle{definition}
\newtheorem{remark}[thm]{Remark}
\numberwithin{equation}{thm}
\renewenvironment{proof}{\par\noindent {\em Proof:}}{\hfill$\Box$\medskip}
\theoremstyle{plain}
\begin{document}
\begin{abstract}

Let $k$ be a field of characteristic zero, and let $i$ and $n$ be positive integers with $i\geq 2$ and $n>i$. Consider a non-invertible $k$-derivation $d_i$ of the polynomial ring $k[x_1,\ldots,x_i]$. Let $d_n$ be an extension of $d_i$ to a derivation of $k[x_1,\ldots, x_n]$ such that $d_n(x_j)\in k[x_{j-1}]\setminus k$ for each $j$ with $i+1 \leq j\leq n$. In this article, we undertake a systematic study of the isotropy groups associated with such non-invertible derivations. We establish sufficient conditions on $d_i$ under which the isotropy group of the non-invertible simple derivation $d_n$ is conjugate to a subgroup of translations.
\end{abstract}

\maketitle

\section{Introduction}
    Let $k$ be a field of characteristic zero and let $n\in \mathbb{N}$. We denote by $R_n$ the polynomial ring in $n$ variables over the field $k$, that is, $R_n=k[x_1,\ldots,x_n]$ where $x_1,\ldots,x_n$ are indeterminates over $k$. A \emph{$k$-derivation} on $R_n$ is a $k$-linear map $d : R_n \rightarrow R_n$ such that $d(fg) = g\,d(f) + f\, d(g)$ for all $f, g \in R_n$. 
    By Aut($R_n$) we denote the $k$-automorphism group of $R_n$. 

    Let $d$ be a $k$-derivation of $R_n$. We denote by Aut$(R_n)_{d}$ the set of $k$ -automorphisms of $R_n$ which commute with $d$. Then Aut$(R_n)_{d}$ is a subgroup of Aut$(R_n)$, and is called the \emph{isotropy group} of $d$. The motivation to study these groups mainly comes from the problem of classifying and understanding affine $k$-algebras. In recent years, many authors have studied the isotropy groups of specific types of derivations, particularly for locally nilpotent derivations and simple derivations; see \cite{Bal21,BN,MP17,MPR24,Y22,YD24} etc.

    A $k$-derivation $d$ of $R_n$ is called \emph{simple} if $R_n$ does not have any proper non-zero ideal $I$ such that $d(I)\subseteq I$. Simple derivations have applications in Ore extensions \cite{g89}, and in the construction of non-commutative simple rings and non-holonomic irreducible modules over Weyl algebras (\cite{g89} and \cite{c07}). In this article, we mainly focus on non-invertible simple derivations of $R_n$, that is, simple derivations which do not contain any units in their images. One motivation for studying such derivations is to explore the skew polynomial rings $R[x,d]$, a non-commutative structure built from a ring $R$ equipped with a simple derivation $d$, where $d(R)$ contains no units of $R$. In general, very few examples of simple derivations are known. For details on examples of classes of simple derivations; see \cite{ g09,k14,n08,ap23} etc. For $n\geq 3$, most of the known examples of simple derivations come from a result of Shamsuddin. He provided a criterion to extend a simple derivation of a ring $A$ to a simple derivation of the polynomial ring $A[t]$, where $t$ is an indeterminate. In this article, we study the isotropy groups of such extended derivations. 

    In \cite{MP17}, Mendes and Pan proved that Aut($R_2$)$_{d}$ is the trivial group for any simple derivation $d$ on $R_2=k[x_1,x_2]$. Shamsuddin type derivations are a well-studied class of derivations. Indeed, multiple criteria are known for such derivations to be simple; see \cite{l08,n94}. One such criterion is in terms of isotropy groups. For $n\geq 2$, it is known that a derivation of Shamsuddin type is simple if and only if its isotropy group is trivial; see \cite{YD24}. In general, D. Yan in \cite{Y22} proposed the following conjecture.

    \begin{conj}\label{conj}
    If $d$ is a simple derivation of $R_n$ then $\text{Aut}(R_n)_{d}$ is conjugate to a subgroup of translations.
    \end{conj}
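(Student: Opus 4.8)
The plan is to reduce the conjecture to a rigidity statement about the ``linear parts'' of automorphisms commuting with $d$ and then to rigidify those linear parts using simplicity. Since conjugating $d$ by $\psi \in \text{Aut}(R_n)$ carries $\text{Aut}(R_n)_d$ isomorphically onto $\text{Aut}(R_n)_{\psi d \psi^{-1}}$, it is enough to exhibit coordinates in which every element of the isotropy group is a translation $x_j \mapsto x_j + c_j$ with $c_j \in k$. The two facts about $d$ that I would use throughout are reformulations of simplicity in characteristic zero: the ring of constants is $\ker d = k$, and $d$ admits no nonconstant Darboux polynomials (equivalently, no proper nonzero ideal of $R_n$ is $d$-stable). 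These give the only available ``invariants'' of $d$, and the whole argument must be powered by them.

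The first step is to eliminate any nontrivial semisimple part from $\text{Aut}(R_n)_d$. Suppose $\varphi \in \text{Aut}(R_n)_d$ generates a nontrivial finite cyclic group or, after passing to its Zariski closure in $\text{Aut}(R_n)$, a nontrivial torus; in characteristic zero the resulting action is linearly reductive, so $R_n$ decomposes into isotypic components $R_n = \bigoplus_{\chi} R_n^{\chi}$, and since $\varphi$ commutes with $d$ the operator $d$ preserves this decomposition. I would then form the ideal $I$ generated by $\bigoplus_{\chi \neq 1} R_n^{\chi}$: because $d$ respects the grading, $d(I) \subseteq I$, and $I \neq 0$ as soon as $\varphi \neq \mathrm{id}$. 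The crux of this step is to argue that $I$ is proper, which forces a contradiction with simplicity; once every element is shown to have no semisimple part, the Jordan--Chevalley decomposition in the ind-group $\text{Aut}(R_n)$ makes $\text{Aut}(R_n)_d$ unipotent, and after a further conjugation I may assume each $\varphi$ has Jacobian equal to the identity.

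The second step is to upgrade ``unipotent with identity Jacobian'' to ``translation''. Filtering $R_n$ by total degree and writing $\varphi(x_j) = x_j + h_j$ with each $h_j$ of order at least $2$, the relation $\varphi d = d\varphi$ linearises, in each successive degree, to a system relating $d$ applied to the leading form of $h_j$ to lower-order data. I would run an induction on the degree of the $h_j$: the absence of nonconstant Darboux polynomials should force each homogeneous contribution to vanish, collapsing every $h_j$ to a constant and exhibiting $\varphi$ as a translation. This is exactly the mechanism that succeeds in the tower case $d_n$ of the abstract, where the chain $d_n(x_j) \in k[x_{j-1}]\setminus k$ supplies a weighting of the variables $x_{i+1}, \ldots, x_n$ under which $d_n$ has a tractable leading form and the induction terminates.

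I expect the genuine obstruction to lie at the two points where simplicity must be converted into a numerical statement: proving that the ideal $I$ of the first step is proper for an \emph{arbitrary} semisimple commuting automorphism, and, more seriously, selecting in the second step a filtration of $R_n$ adapted to a \emph{general} simple derivation so that the degree induction closes. No canonical normal form is known for an arbitrary simple derivation of $R_n$, and it is precisely this absence that keeps the conjecture open; the present paper circumvents it under the tower hypotheses, where the explicit filtration coming from $d_n(x_j)\in k[x_{j-1}]\setminus k$ makes both steps effective, and the fully general statement is left as the conjecture above.
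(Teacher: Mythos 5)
The statement you are addressing is Conjecture~1.1, which the paper records as an open conjecture of D.~Yan and does \emph{not} prove; it only establishes the conclusion for the special tower derivations $d_n=d_i+g_i(x_i)\partial_{x_{i+1}}+\cdots+g_{n-1}(x_{n-1})\partial_{x_n}$ in Theorem~2.4, via a direct coefficient-comparison argument (Lemmas~2.1, 2.2 and 2.5--2.7) that never invokes any semisimple/unipotent structure on the isotropy group. So there is no proof in the paper to match your proposal against, and your own final paragraph concedes that the general statement remains open. As a proof your text therefore cannot be accepted; what remains is to assess whether the strategy could be completed, and both of its steps contain genuine gaps.

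In Step~1, the assertion that the ideal $I$ generated by $\bigoplus_{\chi\neq 1}R_n^{\chi}$ is proper is exactly the hard point, and it is not a routine verification: it would follow if every semisimple element of $\mathrm{Aut}(R_n)_d$ were linearizable (then $I$ lies inside the ideal generated by the coordinates of nontrivial weight), but linearizability of reductive actions on affine space is itself a well-known open-or-false-in-general problem, and without it $I$ may be the unit ideal. Moreover the Jordan--Chevalley decomposition you invoke is a theorem about linear algebraic groups; $\mathrm{Aut}(R_n)$ is only an ind-group, where such a decomposition fails in general, and the result of \cite{MPR24} cited in the paper only makes the \emph{connected component} of $\mathrm{Aut}(R_n)_d$ algebraic, not the whole group. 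In Step~2, the sentence claiming that the absence of nonconstant Darboux polynomials ``should force each homogeneous contribution to vanish'' carries the entire weight of the argument and is unsupported: a general simple derivation does not preserve (or shift by a fixed amount) the total degree, so there is no well-defined leading-form recursion, and a nonzero $h_j$ does not visibly produce a proper nonzero $d$-stable ideal to contradict simplicity. This is precisely why the paper restricts to the tower situation, where the triangular shape $d_n(x_j)\in k[x_{j-1}]\setminus k$ permits an explicit comparison of coefficients of powers of $x_n,x_{n-1},\ldots$ and closes the induction; your scheme does not supply a substitute for that structure in the general case.
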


    Some examples of simple derivations that yield positive answers to \Cref{conj} can be found in \cite{HY23}. Recently, in \cite{MPR24}, the authors proved that the connected component of the isotropy group of a simple derivation of $R_n$ is an algebraic group.

    Note that the Shamsuddin type derivations contain units in their images. In fact, most of the known general results regarding isotropy groups of simple derivations $d$ are such that the image of $d$ contains units. In this article, we present a systematic study of the isotropy groups associated with certain classes of non-invertible derivations of the polynomial ring  $R_n$ for $n\geq 3$. 
    
    Let $i\geq 2$ and $n>i$ be positive integers. Suppose $d_i$ is a derivation of $R_i=k[x_1,\ldots,x_i]$ such that the image of $d_i$ does not contain any non-zero elements of $k[x_i]$ of degree $\leq l$, where $l\in \mathbb{N} \cup \{\infty\}$. Let $d_n:=d_i+g_i(x_i)\partial_{x_{i+1}}+\dots+g_{n-1}(x_{n-1})\partial_{x_n}$  be a $k$-derivation of $R_n$, where $g_j(x_j)\in k[x_j]\setminus k$ for all $i\leq j\leq n-1$ and $\deg_{x_{i}} g_i \leq l$. If the isotropy group of $d_i$ is trivial, then we prove that, under some suitable conditions on $d_i$, the isotropy group of $d_n$ is isomorphic to a subgroup of translations; see \Cref{thm-isotropy}. By a result of Shamsuddin (\cite{sam}), the extended derivation $d_n$ is simple if the derivation 
    $d_i$ is simple. Thus, as a consequence of \Cref{thm-isotropy}, we obtain a sufficient criterion for determining when such extended derivations have isotropy group conjugate to a subgroup of translations, thereby identifying a broad class of simple derivations for which \Cref{conj} holds. 
    As a corollary, we obtain Theorem 3.4 of \cite{Y24} and Theorem 4.3 of \cite{MMS25}.

    In the last section, we provide a class of non- invertible simple derivations $d_2$ of $R_2=k[x_1,x_2]$. 
    In fact, we show that the image of $d_2$ does not contain non-zero elements of $k[x_2]$ (see \Cref{lemma1}), thereby generalizing a result of \cite{k12}. Thus we obtain a class of derivations $d_2$ which satisfy the hypothesis of \Cref{thm-isotropy}.

\section{Isotropy groups}\label{iso-section}
    Let $k$ be a field of characteristic zero. Let $i\geq 2$ be a positive integer. Let $d_i$ be a $k$-derivation of $R_i=k[x_1,\ldots,x_i]$. The derivation $d_i$ is uniquely determined by the values $d_i(x_j)$ for all $1\leq j\leq i$. Let $n\in \mathbb{Z}$ such that $n>i$, and let $R_n:=R_i[x_{i+1},\ldots, x_n]$ be the polynomial ring in $n$ variables over $k$. Note that $R_i\subset R_n$. 

    Next, we define a $k$-derivation of $R_n$ which is induced from the $k$-derivation $d_i$. Let 
        \begin{align}\label{eqn i}
		d_n:=d_i+g_i(x_i)\partial_{x_{i+1}}+\dots+g_{n-1}(x_{n-1})\partial_{x_n},
        \end{align} 
    where $g_j(x_j)\in k[x_j]\setminus k$ for all $i\leq j\leq n-1$. Then $d_n$ is a $k$-derivation of $R_n$. Note that $d_n\vert_{R_i}=d_i$ and $d_i(x_j)=0$ for $i+1\leq j\leq n$.
    
    Let $n\geq j>i$, we denote by $d_j$ the $k$-derivation $d_j:=d_i+g_i(x_i)\partial_{x_{i+1}}+\dots+g_{j-1}(x_{j-1})\partial_{x_j}$ on $R_j:=R_i[x_{i+1},\ldots x_j]$. Then $d_j$ is a $k$-derivation of $R_j$.

    \begin{lem}\label{lemma no units}
        Let $i,n$, $R_i$, $R_n$, $d_i$ and $d_n$ be as above. Let $\deg_{x_i}g_i(x_i)\leq l$ for some $l\in \mathbb{N} \cup \{\infty\}$. Furthermore, assume that $d_i$ satisfies the following property: If $d_i(r)=g(x_i)$ for some $r\in R_i$ and $g(x_i)\in k[x_i]$ with $\deg_{x_i}g\leq l$, then $r\in k$ and $g=0$. 
        
        Let $j\in \{i+1,\ldots, n\}$. Now, if $d_n(f)=h(x_j)\in k[x_j]$ for some $f\in R_j$, then $f\in k$ and $h=0$.
        
    \end{lem}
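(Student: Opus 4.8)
The plan is to reduce to the truncated derivations $d_j$ and then induct on $j$. First I would observe that since $f \in R_j = k[x_1,\dots,x_j]$, every summand $g_m(x_m)\partial_{x_{m+1}}$ of $d_n$ with $m \geq j$ annihilates $f$ (because $m+1 > j$), so that $d_n(f) = d_j(f)$. Hence it suffices to prove the following statement $P(j)$ for $i+1 \leq j \leq n$: if $d_j(f) = h(x_j)$ with $f \in R_j$, then $f \in k$ and $h = 0$.

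For the base case $P(i+1)$, I would write $f = \sum_{m=0}^{N} f_m x_{i+1}^m$ with $f_m \in R_i$ and $f_N \neq 0$, and compare coefficients of powers of $x_{i+1}$ in $d_{i+1}(f) = h(x_{i+1})$. The coefficient of $x_{i+1}^m$ equals $d_i(f_m) + (m+1)g_i(x_i)f_{m+1}$, and each such coefficient must lie in $k$. The top coefficient ($m=N$) gives $d_i(f_N) \in k$, a polynomial in $x_i$ of degree $\leq l$, so the hypothesis on $d_i$ forces $f_N \in k$. If $N \geq 1$, the next coefficient ($m=N-1$) gives $d_i(f_{N-1}) = h_{N-1} - N f_N g_i(x_i)$, again a polynomial in $x_i$ of degree $\leq l$ (here I use $\deg_{x_i} g_i \leq l$); the hypothesis then forces $N f_N g_i(x_i) = h_{N-1} \in k$, which is impossible since $g_i \notin k$ and $f_N \neq 0$. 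Hence $N = 0$, so $f = f_0 \in R_i$ and $d_i(f_0) = h(x_{i+1}) \in R_i \cap k[x_{i+1}] = k$; applying the hypothesis once more yields $f_0 \in k$ and $h = 0$.

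For the inductive step I would assume $P(j-1)$ with $j \geq i+2$ and run the identical coefficient comparison in powers of $x_j$, writing $f = \sum_m f_m x_j^m$ with $f_m \in R_{j-1}$. The coefficient of $x_j^m$ in $d_j(f)$ is $d_{j-1}(f_m) + (m+1)g_{j-1}(x_{j-1})f_{m+1}$. The top coefficient gives $d_{j-1}(f_N) \in k \subseteq k[x_{j-1}]$, so $P(j-1)$ forces $f_N \in k$; and if $N \geq 1$, the next coefficient gives $d_{j-1}(f_{N-1}) = h_{N-1} - N f_N g_{j-1}(x_{j-1}) \in k[x_{j-1}]$, whence $P(j-1)$ forces $N f_N g_{j-1}(x_{j-1}) \in k$, which is impossible. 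Thus $N = 0$, and the same $R_{j-1} \cap k[x_j] = k$ argument closes the step. Note that the degree bound $l$ enters only at $P(i+1)$, since only there does the hypothesis on $d_i$ carry a degree restriction; for $j \geq i+2$ the hypothesis $P(j-1)$ applies to arbitrary polynomials in $x_{j-1}$.

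The crux, and the step I expect to be the main obstacle, is recognizing that the inductive hypothesis must be the \emph{unrestricted} statement $P(j-1)$, so that once the leading coefficient forces $f_N \in k$, the $x_{j-1}$-dependence introduced by $d_j(x_j^m)$ in the next coefficient collapses that expression back into $k[x_{j-1}]$ and lets $P(j-1)$ be invoked a second time. This double application is precisely what rules out any positive $x_j$-degree and drives the whole induction.
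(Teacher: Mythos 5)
Your proposal is correct and follows essentially the same route as the paper: reduce $d_n$ to the truncated derivation $d_j$, compare the top two coefficients in powers of the last variable to force the leading coefficient into $k$ and then derive a contradiction from $g_{j-1}\notin k$, and induct on $j$ with the base case $j=i+1$ being the only place the degree bound $l$ is used. The paper merely phrases the inductive step as ``replace $i$ by $i+1$ and proceed similarly,'' whereas you state the unrestricted inductive hypothesis $P(j-1)$ explicitly, which is a cleaner write-up of the same argument.
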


    \begin{proof} Note that for any $f\in R_j$, for $i+1\leq j\leq n$, we have $d_n(f)=d_j(f)$.
    
        (i) We first prove the result for $j=i+1$. Recall that $d_{i+1}=d_i+g_i(x_i)\partial_{x_{i+1}}$ where $g_i(x_i)\in k[x_i]\setminus k$ with $\deg_{x_i}g_i(x_i)\leq l$. Let $f\in R_{i+1}$ such that $d_{i+1}(f)=h(x_{i+1})$ for some $h(x_{i+1})\in k[x_{i+1}]$.  If $f=0$, we are done. Otherwise $f\in R_{i+1}\backslash\{0\}$, and we can write $f=\sum_{j=0}^{t}f_jx_{i+1}^j$ where $t\geq 0$ and $f_j\in R_i$ for all $0\leq j\leq t$ and $f_t\neq 0$. 
        
        If $t=0$, that is $f\in R_{i}$, then $d_{i+1}(f)=d_{i}(f)$. Now, $d_{i+1}(f)=d_{i}(f)=h(x_{i+1})$ and $d_i(f)\in R_i$ implies that $h(x_{i+1})\in k$. Since $d_{i}(f)\in k$, using the property of $d_i$, we get that $f\in k$ and $h=0$.
        
		Now, we suppose $t\geq 1$, Then
		\begin{align}\label{onlyeqn}
			d_{i+1}(f)&=d_{i+1}(\sum_{j=0}^{t}f_jx_{i+1}^j)\\ \nonumber
			&=x_{i+1}^t	d_{i+1}(f_t)+x_{i+1}^{t-1}(tg_{i}(x_{i})f_t+d_{i+1}(f_{t-1}))+ T,
		\end{align} 
        where $f_t\neq 0$ and $T \in R_{i}[x_{i+1}]$ with $\deg_{x_{i+1}}(T) <t-1$.
        
		Now, $d_{i+1}(f)=h(x_{i+1})$ in \Cref{onlyeqn} implies that $d_{i+1}(f_{t})\in k$ and $tg_{i}(x_{i})f_t+d_{i+1}(f_{t-1})\in k$. As for all $j$, $ 0\leq j\leq t$, we have $d_{i+1}(f_{j})=d_{i}(f_{j})$ since $f_j\in R_{i}$. In particular, $d_{i}(f_{t})\in k$. By the assumption on $d_i$, we get that $f_t\in k^{\ast}$. Also, since $tf_tg_{i}(x_{i})+d_{i}(f_{t-1})\in k$ and $f_t\in k^{\ast}$, it follows that $d_i(f_{t-1})\in k[x_{i}]$ with $\deg_{x_i}(d_i(f_{t-1}))\leq l$. Again from the assumption on $d_i$, we have $f_{t-1}\in k$ and thus $d_{i+1}(f_{t-1})=0$. Then $tf_tg_{i}(x_{i})\in k$, which is a contradiction as $g_i(x_i)\in k[x_i]\setminus k$. 

        (ii) Note that it is enough to prove the result for $j=i+2$. Replacing $i$ by $i+1$ in (i), the proof proceeds similarly. 
    \end{proof}  

    Next, suppose $d_i(x_{j})=h_j(x_1,\ldots ,x_i)$ where $h_j\in R_i\backslash\{0\}$ for all $j=1,\ldots ,i$. Then 
    \begin{align*}
        d_i=h_1(x_1,x_2,\dots,x_i)\partial_{x_1}+h_2(x_1,x_2,\dots,x_i)\partial_{x_2}+\dots+h_i(x_1,x_2,\dots,x_i)\partial_{x_i}.
    \end{align*}    

    We can write
    \begin{align}\label{matrix monomial}
        h_j(x_1,x_2,\dots,x_i)=\alpha_jx_1^{u_{j1}}x_2^{u_{j2}}\dotsm x_i^{u_{ji}} + O_j(x_1,\ldots ,x_i)
    \end{align}
    such that $\alpha_j\in k^{\ast}$, $\sum_{l=1}^{i}u_{jl}$ is the total degree of $h_j$ and $O_j\in R_i$ (The motivation to write such an expression of $h_j$ is to choose one of the monomial from the total degree homogeneous part of $h_j$). Next, we construct a $i\times i$ matrix $A_{d_i}$ over $\mathbb{Z}$ corresponding to $d_i$. We construct the matrix $A_{d_i}$ from the power of the chosen monomials of $h_j$. Let 
    \begin{align}\label{matrix def}
        A_{d_i}:=[u_{jq}]_{1\leq j,q\leq i}
    \end{align}
    where $u_{jq}$ are from \Cref{matrix monomial}. Note that the matrix $A_{d_i}$ does not correspond uniquely to the derivation $d_i$, as the choice of the monomial from $h_j$ is not unique. 

    Let $M$ be a $i\times i$ matrix over $\mathbb{Z}$, $I$ denote the $i\times i$ identity matrix over $\mathbb{Z}$ and $Y$ be the $i\times 1$ vector of indeterminates over $\mathbb{Z}$. We say that $M$ satisfies the condition ($\ast$) if
    \begin{align}\label{$*$}
	\text{the system of inequalities}\,\, (M-I)Y\leq 0  \,\,\, \text{has no non-trivial solution in $\mathbb{Z}_{\geq 0}^i$}. 
    \end{align}

    Next, we show that under suitable conditions, restrictions of elements of Aut$_{d_n}(R_n)$ to $R_i$ are in Aut$_{d_i}(R_i)$.

    \begin{lem}\label{restriction_to_R_i}
	Let $i,n$, $R_i$, $R_n$, $d_i$ and $d_n$ be as above. Let $\rho\in Aut(R_n)_{d_n}$. Suppose that there exists a matrix $A_{d_i}$ corresponding to $d_i$, as in \Cref{matrix def}, satisfying the condition $(\ast)$. Then $\rho|_{R_i}\in Aut(R_i)_{d_i}$.
    \end{lem}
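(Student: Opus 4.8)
The plan is to reduce everything to a single containment statement. Write $F_q:=\rho(x_q)$ for $1\leq q\leq n$. Since $\rho$ commutes with $d_n$ and $d_n|_{R_i}=d_i$, applying $\rho\, d_n=d_n\,\rho$ to $x_j$ for $1\leq j\leq i$ (and using $d_n(x_j)=d_i(x_j)=h_j(x_1,\dots,x_i)$) yields the key relations
\[
d_n(F_j)=h_j(F_1,\dots,F_i)\qquad(1\leq j\leq i).
\]
I claim it suffices to prove $F_1,\dots,F_i\in R_i$. Indeed, the hypothesis of the lemma concerns only $d_i$, and $\rho^{-1}\in\mathrm{Aut}(R_n)_{d_n}$ as well, so the same conclusion applied to $\rho^{-1}$ gives $\rho^{-1}(x_q)\in R_i$ for $q\leq i$. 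Then $\rho|_{R_i}$ and $\rho^{-1}|_{R_i}$ are mutually inverse $k$-endomorphisms of $R_i$, whence $\rho|_{R_i}\in\mathrm{Aut}(R_i)$; and for $f\in R_i$, since $\rho(f)\in R_i$, we get $d_i(\rho(f))=d_n(\rho(f))=\rho(d_n(f))=\rho(d_i(f))$, so $\rho|_{R_i}$ commutes with $d_i$. This produces $\rho|_{R_i}\in\mathrm{Aut}(R_i)_{d_i}$, as desired.

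The main step is to prove $F_1,\dots,F_i\in R_i$ by a weighted-degree argument engineered to feed condition $(\ast)$. I would choose a weight vector $w=(w_1,\dots,w_n)$ with $w_q=0$ for $1\leq q\leq i$ and with $w_{i+1},\dots,w_n$ positive integers increasing rapidly enough that $(\deg g_m)\,w_m\leq w_{m+1}$ for all $i\leq m\leq n-1$. Let $\deg_w$ be the induced weighted degree and set $p_q:=\deg_w(F_q)\geq 0$; since $w$ is positive on exactly $x_{i+1},\dots,x_n$, one has $F_q\in R_i$ if and only if $p_q=0$. A check on generators shows $\deg_w(d_n(x_q))\leq w_q$ for every $q$ (for $q\leq i$ because $h_q\in R_i$; for $q>i$ because $\deg_w(g_{q-1}(x_{q-1}))=(\deg g_{q-1})\,w_{q-1}\leq w_q$), so $d_n$ never raises the $w$-degree. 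Writing $h_j=\alpha_j x_1^{u_{j1}}\cdots x_i^{u_{ji}}+O_j$ and putting $p'=(p_1,\dots,p_i)$, the relation above then gives
\[
(A_{d_i}\,p')_j=\deg_w\big(\alpha_j F_1^{u_{j1}}\cdots F_i^{u_{ji}}\big)\leq\deg_w\big(h_j(F_1,\dots,F_i)\big)=\deg_w\big(d_n(F_j)\big)\leq p_j .
\]
Hence $(A_{d_i}-I)\,p'\leq 0$ with $p'\in\mathbb{Z}_{\geq 0}^i$, so condition $(\ast)$ forces $p'=0$, i.e. $F_1,\dots,F_i\in R_i$.

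The delicate point, and the step I expect to require the most care, is the middle inequality $(A_{d_i}\,p')_j\leq\deg_w\big(h_j(F_1,\dots,F_i)\big)$: one must ensure that the contribution of the distinguished monomial $\alpha_j F_1^{u_{j1}}\cdots F_i^{u_{ji}}$ to the top $w$-graded piece is not annihilated by the remaining monomials coming from $O_j$. Letting $\phi_q$ denote the leading $w$-form of $F_q$, such a cancellation would be a $w$-homogeneous relation $\sum_{\langle\beta,p'\rangle=M}c_\beta\prod_q\phi_q^{\beta_q}=0$ among the $\phi_q$. I would rule this out by choosing the free weights $w_{i+1},\dots,w_n$ generically (super-increasing), so that the linear functional $\beta\mapsto\langle\beta,p'\rangle$ separates the finitely many exponents in the support of $h_j$; then the top $w$-graded piece of $h_j(F_1,\dots,F_i)$ is a single nonzero product of leading forms, and no cancellation can occur. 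Verifying that one such weight $w$ works simultaneously for all $j\leq i$, and that the resulting $p'$ indeed separates supports, is the technical heart of the argument.

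Granting the non-cancellation, the two inequalities combine exactly as displayed to give $(A_{d_i}-I)\,p'\leq 0$, and $(\ast)$ eliminates every nonzero nonnegative solution, leaving $p'=0$. Thus $F_1,\dots,F_i\in R_i$, and the reduction in the first paragraph (applied also to $\rho^{-1}$) finishes the proof that $\rho|_{R_i}\in\mathrm{Aut}(R_i)_{d_i}$.
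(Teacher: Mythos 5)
Your argument is essentially the paper's argument, with one structural difference in how the degree bookkeeping is organized. The paper takes $\deg_{x_n}$ of both sides of $d_n(\rho(x_j))=\rho(h_j(x_1,\dots,x_i))$, obtains $t_j\geq\sum_q u_{jq}t_q$ for the $x_n$-degrees $t_q$ of the $\rho(x_q)$, invokes $(\ast)$ to conclude $t_q=0$, and then repeats the whole procedure with $x_{n-1},x_{n-2},\dots$ until $\rho(x_q)\in R_i$; your single weighted degree $\deg_w$ (zero weights on $x_1,\dots,x_i$, rapidly increasing weights on the remaining variables so that $d_n$ is $\deg_w$-non-increasing) collapses those $n-i$ rounds into one application of $(\ast)$, which is a genuine and slightly cleaner repackaging. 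The closing reduction --- run the same argument for $\rho^{-1}$, and use $d_n|_{R_i}=d_i$ to see that $\rho|_{R_i}$ commutes with $d_i$ --- is identical to the paper's.

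The one place your write-up does not close is the ``delicate point'' you flag yourself: the inequality $\langle u_j,p'\rangle\leq\deg_w\bigl(h_j(F_1,\dots,F_i)\bigr)$ could fail if the top $w$-graded contributions of the distinguished monomial and of the monomials in $O_j$ cancel. Your proposed remedy does not work as stated: the functional $\beta\mapsto\langle\beta,p'\rangle$ is built from $p'=(\deg_w F_1,\dots,\deg_w F_i)$, which depends on $\rho$ and on $w$ itself, so you cannot choose $w$ ``generically'' to separate the support of $h_j$ --- you have no control over $p'$. You should be aware, however, that the paper's own proof is silent on exactly the same point: it passes from \Cref{eqn2.2} directly to $t_j\geq u_{j1}t_1+\dots+u_{ji}t_i$ without excluding cancellation between $\alpha_j\rho(x_1)^{u_{j1}}\dotsm\rho(x_i)^{u_{ji}}$ and $\rho(O_j)$. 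So your proposal reproduces the published argument, including its unaddressed subtlety; the generic-weight paragraph should either be replaced by an actual non-cancellation argument or dropped, since as written it adds no rigor beyond the bare assertion the paper itself makes.
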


    \begin{proof}
    Let $A_{d_i}$ be the matrix corresponding to $d_i$ satisfying the condition ($\ast$). Let ${[u_{jq}]}_{1\leq j,q\leq i}$ denote the entries of $A_{d_i}$. Then, for all $1\leq j\leq i$, we can write 
    \begin{align}\label{eqn2.3}
        h_j(x_1,x_2,\dots,x_i):=d(x_{j})=\alpha_jx_1^{u_{j1}}x_2^{u_{j2}}\dotsm x_i^{u_{ji}} + O_j(x_1,\ldots ,x_i)
    \end{align}
    where $\alpha_j\in k^{\ast}$, $\sum_{l=1}^{i}u_{jl}$ is the total degree of $h_j$ and $O_j\in R_i$.
    
        Let $\rho\in Aut(R_n)_{d_n}$. For all $1\leq j\leq i$, let us write 
        \begin{align}\label{eqn2.1}
            \rho(x_j)=\sum_{m=0}^{t_j}f_{jm}x_n^m,
        \end{align}
         where $f_{jm}\in k[x_1,x_2,\dots,x_{n-1}]$ for all $0\leq m\leq t_j$ and $f_{jt_j}\neq 0$ for all $j$. 
        
        Now, $d_n(\rho(x_j))=\rho(d_n(x_j))$ for all $ 1\leq j\leq i$. Using \Cref{eqn2.3}, we have
 	\begin{align*}
        d_n(\sum_{m=0}^{t_j}f_{jm}x_n^m)&=\rho(h_j(x_1,x_2,\dots,x_i))\\
          &=\rho(\alpha_j x_1^{u_{j1}}x_2^{u_{j2}}\dotsm x_i^{u_{ji}}+O_j),
    \end{align*}
     for all $1\leq j\leq i$. Furthermore, 
     
    \begin{multline}\label{eqn2.2}
       d_n(f_{jt_{j}})x_n^{t_j}+d_n(f_{jt_j-1})x_n^{t_j-1}+\dots+d_n(f_{j0})+g_{n-1}(x_{n-1})\bigg(\sum_{m=1}^{t_j}mf_{jm}x_n^{m-1}\bigg)\\
 	=\alpha_j\rho(x_1)^{u_{j1}}\rho(x_2)^{u_{j2}}\dotsm\rho(x_i)^{u_{ji}}+\rho(O_j).
    \end{multline}
    Note that $f_{jt_j}\neq 0$, for all $1\leq j\leq i$. Substituting the expression of $\rho(x_j)$ from \Cref{eqn2.1} to \Cref{eqn2.2}, we get that
    \begin{align}
 	t_j\geq u_{j1}t_1+u_{j2}t_2+\dots+u_{ji}t_i, 
    \end{align} 
    for all $1\leq j\leq i$.
    
    Since $A_{d_i}={[u_{jq}]}_{1\leq j,q\leq i}$,  and it satisfies condition ($\ast$) of \Cref{$*$}, then it follows that $t_j=0$ for $1\leq j\leq i$. 
    
    Thus, $\rho(x_j)\in k[x_1,x_2,\dots,x_{n-1}]$ for all $1\leq j\leq i$. Now, similarly repeating the above process, we get that $\rho(x_j)\in k[x_1,x_2,\dots,x_{i}]$ for all $1\leq j\leq i$. 
    
    Since, $\rho^{-1}\in Aut(R_n)_{d_n}$. Replacing $\rho$ by $\rho^{-1}$ in the above argument, we get that $\rho^{-1}(x_j)\in k[x_1,x_2,\dots,x_i]$ for all $1\leq j\leq i$. Thence, $\rho|_{R_i}\in Aut(R_i)_{d_i}$. 
    \end{proof}

     \begin{remark}
     The above lemma still holds if $g_j(x_j)\in k[x_j]$ for all $i\leq j\leq n-1$.
    \end{remark}

    Next, we prove our main result.
    
    \begin{thm}\label{thm-isotropy}
     Let $i,n$, $R_i$, $R_n$, $d_i$ and $d_n$ be as above. Let $\deg_{x_i}g_i(x_i)\leq l$ for some $l\in \mathbb{N} \cup \{\infty\}$. Suppose $d_i$ satisfies the following conditions:
     \begin{enumerate}
         \item[(i)] If $d_i(r)=g(x_i)$ for some $r\in R_i$ and $g(x_i)\in k[x_i]$ with $\deg_{x_i}g(x_i)\leq l$, then $r\in k$ and $g=0$, 
         \item[(ii)] there exists a matrix $A_{d_i}$ corresponding to $d_i$ satisfying the condition $(\ast)$, as in \Cref{$*$}, and
         \item[(iii)] $Aut(R_i)_{d_i}=\{\text{Id}\}$. 
     \end{enumerate} 
     
     Then $Aut(R_n)_{d_n}=\{(\ x_1,x_2,\dots,x_n+c)|c\in k\}.$
     \end{thm}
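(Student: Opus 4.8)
The plan is to take an arbitrary $\rho \in Aut(R_n)_{d_n}$ and prove it is a translation $x_n \mapsto x_n + c$. The first move is to descend to the base ring: since condition (ii) supplies a matrix $A_{d_i}$ satisfying $(\ast)$, \Cref{restriction_to_R_i} gives $\rho|_{R_i} \in Aut(R_i)_{d_i}$, and then condition (iii) forces $\rho|_{R_i} = \mathrm{Id}$, i.e.\ $\rho(x_j) = x_j$ for $1 \le j \le i$. Next I would record the consequence of \Cref{lemma no units} that gets used constantly: taking $h = 0$ (with any admissible $j$) shows $\ker d_j = k$ for every $i+1 \le j \le n$, since a nonconstant element of $R_j$ cannot map to $0 \in k[x_j]$. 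Note also that every translation $x_n \mapsto x_n + c$ commutes with $d_n$, because each $d_n(x_j)$ is independent of $x_n$ and hence $d_n$ commutes with $\partial_{x_n}$; this gives the reverse inclusion for free, so only the forward inclusion needs work.

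The heart of the argument is a reduction statement: if $w \in R_n$ and $d_n(w) \in k[x_s]$ for some $i \le s \le n-1$, then in fact $w \in R_{s+1}$. I would prove it by peeling off the top variable. Writing $w = \sum_p w_p x_n^p$ with $w_p \in R_{n-1}$ and $t = \deg_{x_n} w$, the vanishing of the $x_n$-coefficients of positive degree in $d_n(w)$ forces the leading coefficient $w_t \in \ker d_{n-1} = k$, and the next relation $d_{n-1}(w_{t-1}) = -t\,w_t\, g_{n-1}(x_{n-1}) \in k[x_{n-1}]$ is impossible by \Cref{lemma no units} unless $t \le 1$; a further leading-coefficient analysis in $x_{n-1}$ (again using $\ker d_{n-2} = k$ and \Cref{lemma no units}) excludes $t = 1$ when $s \le n-2$, since the error term $w_1 g_{n-1}(x_{n-1})$ would force $g_{n-1}$ into the image of $d_{n-1}$. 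Thus $w \in R_{n-1}$, and iterating down the chain $R_{n-1} \supset \cdots \supset R_{s+1}$ proves the claim. This reduction is the main obstacle: it is precisely what converts the a priori membership $w \in R_n$ into the hypothesis $w \in R_j$ demanded by \Cref{lemma no units}, and it must be done with care because the intermediate variable $x_{n-1}$ genuinely appears in the error term.

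With the reduction available I would finish by an upward induction on $m$, proving $\rho(x_m) = x_m$ for $i < m \le n-1$. Assuming $\rho$ fixes $x_1, \dots, x_m$ (base case $m = i$), commutation gives $d_n(\rho(x_{m+1})) = g_m(\rho(x_m)) = g_m(x_m) \in k[x_m]$, so the reduction yields $\rho(x_{m+1}) \in R_{m+1}$, and then $\rho(x_{m+1}) - x_{m+1} \in \ker d_{m+1} = k$, whence $\rho(x_{m+1}) = x_{m+1} + c_{m+1}$. To kill $c_{m+1}$ when $m+1 \le n-1$, I look one step further: $d_n(\rho(x_{m+2})) = g_{m+1}(x_{m+1}+c_{m+1}) \in k[x_{m+1}]$, so the reduction gives $\rho(x_{m+2}) \in R_{m+2}$; expanding $\rho(x_{m+2})$ as a polynomial in $x_{m+2}$ over $R_{m+1}$, the top-coefficient analysis forces its $x_{m+2}$-degree to equal $1$ with leading coefficient $a \in k^{\ast}$, while the constant-term relation $d_{m+1}(w_0) = g_{m+1}(x_{m+1}+c_{m+1}) - a\,g_{m+1}(x_{m+1}) \in k[x_{m+1}]$ combined with \Cref{lemma no units} gives $g_{m+1}(x_{m+1}+c_{m+1}) = a\,g_{m+1}(x_{m+1})$. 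Comparing leading coefficients forces $a = 1$, and then $g_{m+1}(x_{m+1}+c_{m+1}) = g_{m+1}(x_{m+1})$ forces $c_{m+1} = 0$ because $g_{m+1} \notin k$. This completes the induction, so $\rho$ fixes $x_1, \dots, x_{n-1}$; the identical computation for $x_n$ gives $d_n(\rho(x_n) - x_n) = 0$, hence $\rho(x_n) = x_n + c$, yielding the stated equality $Aut(R_n)_{d_n} = \{(x_1, \dots, x_n + c) \mid c \in k\}$.
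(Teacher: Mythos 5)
Your proposal is correct, but it is organized differently from the paper. The paper proves the theorem through three lemmas (\Cref{x_n lemma}, \Cref{x_j_maps_till_x_j}, \Cref{final lemma}): it first establishes the full triangular form $\rho(x_j)=\alpha_j x_j+f_j(x_1,\ldots,x_{j-1})$ by peeling variables from the top ($x_n$, then $x_{n-1}$, and so on), and in doing so it must invoke surjectivity of $\rho$ to rule out the degenerate case where some $x_j$ fails to appear in the image; only afterwards does an upward induction force $\alpha_j=1$ and kill the constants via $g_j(x_j+c)=g_j(x_j)\Rightarrow c=0$. You instead isolate a single reduction lemma about arbitrary ring elements --- $d_n(w)\in k[x_s]$ implies $w\in R_{s+1}$ --- and then run one upward induction in which the reduction, the kernel fact $\ker d_j\cap R_j=k$ from \Cref{lemma no units}, and the same ``look one step ahead'' trick do all the work; this is more modular, yields $\rho(x_{m+1})-x_{m+1}\in k$ in one line rather than by coefficient-matching, and needs the automorphism property only through \Cref{restriction_to_R_i}. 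The one place you are loose is the exclusion of $t=1$ in the reduction: the parenthetical claim that the error term ``would force $g_{n-1}$ into the image of $d_{n-1}$'' is not quite what happens, since $d_{n-1}(w_0)=h(x_s)-w_1g_{n-1}(x_{n-1})$ lies in $k[x_s,x_{n-1}]$ rather than $k[x_{n-1}]$ and so \Cref{lemma no units} does not apply directly; the promised leading-coefficient analysis of $w_0$ in $x_{n-1}$ does go through (the top coefficient lands in $k^{\ast}$ and the next relation forces $g_{n-2}\in k$, or, at the bottom level $j-1=i$, contradicts condition (i) via the degree bound $l$ on $g_i$), but this is exactly the computation that should be written out, as it is where the hypothesis $\deg_{x_i}g_i\leq l$ is actually consumed.
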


    We first prove a few lemmas.

    \begin{lem}\label{x_n lemma}
    Let $i,n$, $R_i$, $R_n$, $d_i$ and $d_n$ be as above. Let $\deg_{x_i}g_i(x_i)\leq l$ for some $l\in \mathbb{N} \cup \{\infty\}$. Suppose $d_i$ satisfies conditions (i), (ii) and (iii) of \Cref{thm-isotropy}. Let $\rho\in Aut(R_n)_{d_n}$. 
    Then 
    \begin{enumerate}
        \item $\rho(x_{i+1})=x_{i+1}+\beta$ where $\beta \in k$,
        \item $\rho(x_j)\in R_{n-1}$ for all  $i+2\leq j \leq n-1$, and $\rho(x_n)=\alpha x_n+f_{n0}(x_1,\ldots ,x_{n-1})$ where $\alpha\in k^{\ast}$ and $f_{n0}(x_1,\ldots ,x_{n-1})\in R_{n-1}$.
    \end{enumerate}
    \end{lem}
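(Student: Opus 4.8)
The starting point is that $\rho$ commutes with $d_n$, so $d_n(\rho(x_j)) = \rho(d_n(x_j)) = g_{j-1}(\rho(x_{j-1}))$ for every $i+1 \leq j \leq n$, where we set $\rho(x_i) = x_i$. By \Cref{restriction_to_R_i} (available because hypothesis (ii) holds) together with hypothesis (iii), $\rho|_{R_i} = \mathrm{Id}$, so $\rho$ fixes $R_i$ pointwise; in particular $d_n(\rho(x_{i+1})) = g_i(x_i) \in R_i$. All the work is then extracted from two facts, each a consequence of \Cref{lemma no units}. \emph{(Descent.)} For $i+1 \leq m \leq n$, if $f \in R_m$ and $d_m(f) \in R_{m-1}$, then $\deg_{x_m} f \leq 1$, with leading coefficient in $k^{\ast}$ when the degree equals $1$. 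One writes $f = \sum_p f_p x_m^p$ with $f_p \in R_{m-1}$ and compares the top two $x_m$-coefficients of $d_m(f) = \sum_p d_{m-1}(f_p) x_m^p + g_{m-1}(x_{m-1})\sum_p p f_p x_m^{p-1}$: the leading one forces $d_{m-1}(f_t) = 0$, so $f_t \in k^{\ast}$ by \Cref{lemma no units} (by hypothesis (i) when $m = i+1$), and for $t \geq 2$ the next coefficient forces $d_{m-1}(f_{t-1})$ to be a nonzero $k$-multiple of $g_{m-1}$, which \Cref{lemma no units} forbids. \emph{(Rigidity.)} For $i+2 \leq m \leq n$, if $Q \in R_{m-1}$, $c \in k$ and $d_{m-1}(Q) + c\,g_{m-1}(x_{m-1}) \in R_{m-2}$, then $c = 0$; expanding $Q$ in powers of $x_{m-1}$ and reading off the top coefficient, $c \neq 0$ again forces a nonconstant one-variable polynomial to be a constant, contradicting \Cref{lemma no units}.

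For part (1) I would apply these two facts to $P := \rho(x_{i+1})$, which satisfies $d_n(P) = g_i(x_i) \in R_i$. Descending through $m = n, n-1, \ldots, i+2$, Descent writes $P = c\,x_m + Q$ with $Q \in R_{m-1}$, and substituting into $d_m(P) = c\,g_{m-1}(x_{m-1}) + d_{m-1}(Q) \in R_i \subseteq R_{m-2}$, Rigidity gives $c = 0$; hence $P \in R_{i+1}$. Solving $d_{i+1}(P) = g_i(x_i)$ inside $R_{i+1} = R_i[x_{i+1}]$ coefficientwise and invoking hypothesis (i) then yields $P = x_{i+1} + \beta$ with $\beta \in k$.

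For part (2) I would show $\rho(x_j) \in R_{n-1}$ (that is, $\deg_{x_n}\rho(x_j) = 0$) by induction on $j = i+1, \ldots, n-1$, the case $j = i+1$ being part (1). If $j_0$ were the least index in this range with $\deg_{x_n}\rho(x_{j_0}) \geq 1$ (so $j_0 \geq i+2$), then $\rho(x_{j_0-1}) \in R_{n-1}$ forces $d_n(\rho(x_{j_0})) = g_{j_0-1}(\rho(x_{j_0-1})) \in R_{n-1}$, and Descent gives $\rho(x_{j_0}) = c\,x_n + (\cdots)$ with $c \in k^{\ast}$. Substituting this into $d_n(\rho(x_{j_0+1})) = g_{j_0}(\rho(x_{j_0}))$, whose right side now has $x_n$-degree $\deg g_{j_0} \geq 1$ with constant leading coefficient, and comparing the top $x_n$-coefficients yields a contradiction with \Cref{lemma no units} by the same mechanism as in Rigidity. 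Hence no such $j_0$ exists and $\rho(x_j) \in R_{n-1}$ for $i+1 \leq j \leq n-1$. Finally $d_n(\rho(x_n)) = g_{n-1}(\rho(x_{n-1})) \in R_{n-1}$, so Descent gives $\rho(x_n) = \alpha x_n + f_{n0}$ with $\alpha \in k$ and $f_{n0} \in R_{n-1}$; were $\alpha = 0$, all of $\rho(x_1), \ldots, \rho(x_n)$ would lie in $R_{n-1}$ and $\rho(R_n) \subseteq R_{n-1}$, contradicting surjectivity of $\rho$, so $\alpha \in k^{\ast}$.

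The step I expect to be the main obstacle is Rigidity and its reuse in part (2): ruling out that $\rho$ silently introduces a higher variable (ultimately $x_n$) into the images $\rho(x_{i+1}), \ldots, \rho(x_{n-1})$. The delicate point is that the expressions that arise, such as $g_i(x_i) - c\,g_{n-1}(x_{n-1})$, combine two genuinely nonconstant one-variable polynomials in \emph{different} variables and so are not themselves of the single-variable shape to which \Cref{lemma no units} applies; the remedy is to expand in the top variable and strip off coefficients one at a time, so that \Cref{lemma no units} can be invoked at each stage, each time forcing a nonconstant polynomial to be a constant.
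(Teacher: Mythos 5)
Your proof is correct, and its engine is the same as the paper's: expand the image of each variable in the highest remaining variable, compare the top two coefficients, and invoke \Cref{lemma no units} (or hypothesis (i) at the bottom level $m=i+1$) to force those coefficients into $k$, deriving a contradiction from a nonconstant $g_j$ being made constant. The differences are organizational rather than substantive, but they are real. For part (1) you package the descent into two reusable sublemmas (Descent and Rigidity), whereas the paper rules out an $x_n$-linear term in $\rho(x_{i+1})$ by the kernel observation $d_n(f_0+f_1x_n-x_{i+1})=0$, hence $f_0+f_1x_n-x_{i+1}\in k$; both mechanisms work. For part (2) the paper proceeds in two phases: first it bounds $\deg_{x_n}\rho(x_j)\leq 1$ for all $i+2\leq j\leq n$ by a forward induction (its step (A)), then it kills the linear coefficients by a backward induction anchored at the surjectivity argument for $\rho(x_n)$ (its step (B)). You collapse these into a single minimal-counterexample argument: the first $j_0\leq n-1$ with $\deg_{x_n}\rho(x_{j_0})\geq 1$ forces $d_n(\rho(x_{j_0+1}))=g_{j_0}(cx_n+Q)$ to have positive $x_n$-degree with leading coefficient in $k^{\ast}$, which your coefficient-stripping then contradicts; this effectively merges Case II of the paper's (2)(A) with its (2)(B) and is slightly more economical. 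The one point that must be written out in full is the one you flag yourself: expressions such as $\gamma - s h_s g_{n-1}(x_{n-1})$ with $\gamma\in k$ arise only after isolating a single $x_n$-coefficient, and only then does \Cref{lemma no units} apply; your sketch handles this correctly, including the two subcases $s=\deg g_{j_0}$ (top coefficient already gives $d_{n-1}(h_s)\in k^{\ast}$, impossible) and $s>\deg g_{j_0}$ (second coefficient gives a nonzero multiple of $g_{n-1}$ in the image of $d_{n-1}$, impossible).
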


    \begin{proof}
        We have $\rho\in$ Aut$(R_n)_{d_n}$. Since there exists a matrix corresponding to $d_i$ satisfying ($\ast$), it follows from \Cref{restriction_to_R_i} that $\rho |_{R_i}\in$ Aut$(R_i)_{d_i}$. Then from condition (iii), we have $\rho(x_j)=x_j$ for all $1\leq j\leq i$. 

    Also, from the condition $(i)$ on $d_i$ and \Cref{lemma no units}, it follows that: if  $f \in R_j$ be such that $d_j(f)=h(x_j) \in k[x_j]$, for any $i+1\leq j\leq n$, then $f\in k$ and $h=0$. We will use this fact throughout this proof.

    \textbf{(1)} Let $\rho(x_{i+1})=\sum_{j=0}^{t} f_j x_n^j$ where $f_j\in R_{n-1}$ and $f_t\neq 0$. From $d_n(\rho(x_{i+1}))$ $=\rho(d_n(x_{i+1}))$ we get that 
    \begin{multline}\label{eqn2.4}
        d_n(f_t)x_n^t+\cdots+d_n(f_0)+g_{n-1}(x_{n-1})(tf_tx_{n}^{t-1}+\cdots+f_1)=\rho(g_i(x_i))=g_i(x_i). 
    \end{multline}
    Comparing the coefficients of $x_n^l$, for all $l=0,\ldots,t$, we get the following equations
    \begin{align}
        \label{eqn2.5} d_n(f_t)&=0\\
        \label{eqn2.6} d_n(f_{t-1})+tf_tg_{n-1}(x_{n-1})&=0\\
        & \vdots  \nonumber \\
        \label{eqn2.7} d_n(f_1)+2f_2 g_{n-1}(x_{n-1})&=0\\
        \label{eqn2.8} d_n(f_0)+f_1g_{n-1}(x_{n-1})&=g_i(x_i).
    \end{align}
    Suppose $t\geq 2$. From \Cref{eqn2.5}, it follows that $f_t\in k^{\ast}$. Using that in \Cref{eqn2.6} we get that $d_n(f_{t-1}+tf_tx_n)=0$, which furthermore implies that $f_{t-1}+tf_tx_n\in k$, which is a contradiction. Hence $t\leq 1$.

    Now, suppose $t=1$. 
    Then using Equations (\ref{eqn2.5}) and (\ref{eqn2.8}) we get that $f_1\in k^{\ast}$ and $f_0+f_1x_n-x_{i+1}\in k$. Suppose $n=i+1$. Then it follows that $f_1=1$ and $f_0\in k$. Hence we are done. Now suppose $n>i+1$. Then from $f_0+f_1x_n-x_{i+1}\in k$ we get a contradiction. 
    
Suppose $t=0$ and $n=i+1$. Then we have $d_n(f_0)=g(x_i)$. Thus, $f_0\in k$ and $g_i(x_i)=0$, which is a contradiction.


Suppose that $n>i+1$ and $\rho(x_{i+1})\in R_{n-1}$.
Repeating the above procedure we can show that $\rho(x_{i+1})\in R_{i+1}$. Let $\rho(x_{i+1})=\sum_{j=0}^{t} f_j x_{i+1}^j$ where $f_j\in R_i$ and $f_t\neq 0$. Using condition (i) and arguing similarly as above, we can show that $t\leq 1$. If $t=0$ then $d_n(f_0)=g_i(x_i)\in k[x_{i}]$. Note that $d_n(f_0)=d_i(f_0)$ and $\deg_{x_i}g_i(x_i)\leq l$. Now, from condition (I), we have $f_0\in k$ and $g_i(x_i)=0$. This is a contradiction to the fact that $g_i\notin k$. Hence $t=1$, and we get that 
    \begin{align}
        \label{eqn2.9} d_n(f_1)&=0\\ 
        \label{eqn2.9'} d_n(f_0)+f_1g_i(x_i)&=g_i(x_i).  
    \end{align}
    From \Cref{eqn2.9} it follows that $f_1\in k^{\ast}$. Then from \Cref{eqn2.9'}, we have $d_n(f_0)=g_i(x_i)(1-f_1)\in k[x_i]$ with $\deg_{x_i}(d_n(f_0))\leq l$. Hence from condition (i) we get that $f_0\in k$, that is, $d_n(f_0)=0$. Also, as $g_i\neq 0$, we have $f_1=1$. Thus we can write $\rho(x_{i+1})=x_{i+1}+\beta$ for some $\beta \in k$. 
    
    \textbf{(2)} Note that here we have $n>i+1$. The proof of (2) is divided in the following two parts, namely (A) and (B). 
    
    \textbf{(A)} We first show by induction that for all $i+2\leq j \leq n$, $\rho(x_{j})=f_{j1}x_n+f_{j0}$ where $f_{j1}\in k$ and $f_{j0}\in R_{n-1}$. 
    
    Let $\rho(x_{i+2})=\sum_{m=0}^{t} f_{m} x_{n}^m$ where $f_t\neq 0$ and $f_m \in R_{n-1}$ for all $m$. If possible, let $t\geq 2$. From $d_n(\rho(x_{i+2}))=\rho(d_n(x_{i+2}))$ we get that $d_n(f_t)=0$ and $d_n(f_{t-1})+tf_tg_{n-1}(x_{n-1})=0$. Then $f_t\in k^{\ast}$ and $f_{t-1}+tf_tx_n\in k$, which is a contradiction. Hence $t\leq 1$. Now, if $t=1$, then $\rho(x_{i+2})=f_1x_n+f_0$ where $f_1\in k^{\ast}$ and $f_0\in R_{n-1}$, and if $t=0$ then $\rho(x_{i+2})=f_0\in R_{n-1}$. 
    

    Next, we assume that for some $j\in 
    \{i+1,\ldots,n-1\}$ we have $\rho(x_{j})=f_1x_n+f_0$ where $f_1\in k$ and $f_0\in R_{n-1}$.

Let $\rho(x_{j+1})=\sum_{m=0}^{s} h_mx_n^m$ where $h_s\neq 0$ and $h_m\in R_{n-1}$ for all $m$. Then from $d_n(\rho(x_{j+1}))=\rho(d_n(x_{j+1}))$ we get that 
     \begin{equation}\label{eqn2.10}
        d_n(h_s)x_n^s+\cdots+d_n(h_0)+g_{n-1}(x_{n-1})(sh_sx_{n}^{t-1}+\cdots+h_1)=g_j(f_1x_n+f_0). 
    \end{equation}
    Assume that $s\geq 2$. We will consider two cases.
    
    Case I. Suppose that $f_1=0$.  Then we get that $d_n(h_s)=0$ and $d_n(h_{s-1})+sh_sg_{n-1}(x_{n-1})=0$. Thus $h_s\in k^{\ast}$ and $h_{s-1}+sh_sx_n\in k$, which is a contradiction. 
    
    Case II. Now, suppose if $f_1\in k^{\ast}$. Note that the highest degree coefficient of $x_n$ in $g_j(f_1x_n+f_0)$ is in $k^{\ast}$. Also, $s\geq \deg_{x_n}(g_j(f_1x_n+f_0))$. Now, comparing the coefficient of $x_n^s$ from \Cref{eqn2.10}, we get that $d_n(h_s)\in k$. Then $h_s\in k^{\ast}$ and $d_n(h_s)=0$. Thence, we get $s-1\geq \deg_{x_n}(g_j(f_1x_n+f_0))$. Now, comparing the coefficient of $x_n^{s-1}$ we get that $d_n(h_{s-1})+sh_sg_{n-1}(x_{n-1})\in k$. Thus  
    $d_n(h_{s-1}+sh_sx_n) \in k$. Then $h_{s-1}+sh_sx_n \in k$, which is a contradiction.


    Hence, we get that $s\leq 1$. Note that $g_j\in k[x_j]\setminus k$. Suppose $s=1$, then comparing coefficient of $x_n$ from \Cref{eqn2.10} we have $d_n(h_1)\in k$, which further implies that $h_1\in k^{\ast}$. Now, if $s=0$, then $\rho(x_{j+1})=h_0\in R_{n-1}$. Thus we can write $\rho(x_{j+1})=h_1x_n+h_0$ where $h_1\in k$ and $h_0\in R_{n-1}$.

    \textbf{(B)} From (A) we have, for all $i+2\leq j \leq n$, $\rho(x_{j})=f_{j1}x_n+f_{j0}$ where $f_{j1}\in k$ and $f_{j0}\in R_{n-1}$. Now we show that $\rho(x_j)=f_{j0}\in R_{n-1}$ for all $i+2\leq j\leq n-1$ and $\rho(x_n)=\alpha x_n+f_{n0}$ where $\alpha \in k^{\ast}$ and $f_{n0}\in R_{n-1}$. 

    Recall that $\rho(x_n)=f_{n1}x_n+f_{n0}$ where $f_{n1}\in k$ and $f_{n0}\in R_{n-1}$. Suppose $f_{n1}=0$. Then $d_n(\rho(x_n))=\rho(d_n(x_n))$ implies that $d_n(f_{n0})=g_{n-1}(f_{(n-1)1}x_{n}+f_{(n-1)0})$. As $g_{n-1}\in k[x_{n-1}]\setminus k$ and $d_n(f_{n0})\in R_{n-1}$, we get that $f_{(n-1)1}=0$. Now, using $d_n(\rho(x_j))=\rho(d_n(x_j))$ for all $i+2\leq j\leq n-1$, inductively, we can show that $\rho(x_j)=f_{j0}\in R_{n-1}$. But this contradicts the fact that $\rho$ is an automorphism as $x_n\notin$ Image($\rho$). Hence $f_{n1}\neq 0$, i.e., $\rho(x_n)=f_{n1}x_n+f_{n0}$ where $f_{n1}\in k^{\ast}$ and $f_{n0}\in R_{n-1}$. 

    Again using $d_n(\rho(x_n))=\rho(d_n(x_n))$, we have 
    \begin{align}\label{eqn2.11}
        d_n(f_{n1}x_{n}+f_{n0})=\rho(g_{n-1}(x_{n-1}))=g_{n-1}(f_{(n-1)1}x_{n}+f_{(n-1)0}).
    \end{align}
    Thus $g_{n-1}(f_{(n-1)1}x_{n}+f_{(n-1)0})=f_{n1}g_{n-1}(x_{n-1})+d_n(f_{n0}) \in R_{n-1}$. Now, if $f_{(n-1)1}\in k^{\ast}$, we get a contradiction as $g_{n-1}(x_{n-1})\in k[x_{n-1}]\setminus k$. Hence $\rho(x_{n-1})=f_{(n-1)0}\in R_{n-1}$.  


    Similarly, from $d_n(\rho(x_{n-1}))=\rho(d_n(x_{n-1}))$, we get that $$d_n(f_{(n-1)0})=g_{n-2}(f_{(n-2)1}x_n+f_{(n-2)0}).$$ Again, if $f_{(n-2)1}\in k^{\ast}$, we get a contradiction. Hence $\rho(x_{n-2})=f_{(n-2)0}\in R_{n-1}$.

    Proceeding similarly, we get that $\rho(x_j)=f_{j0}\in R_{n-1}$ for all $i+2\leq j\leq n-1$. This completes the proof. 
    \end{proof}

    \begin{lem}\label{x_j_maps_till_x_j}
    Let $i,n$, $R_i$, $R_n$, $d_i$ and $d_n$ be as above. Let $\deg_{x_i}g_i(x_i)\leq l$ for some $l\in \mathbb{N} \cup \{\infty\}$. Suppose $d_i$ satisfies conditions (i), (ii), and (iii) of \Cref{thm-isotropy}. Let $\rho\in Aut(R_n)_{d_n}$. 
    Then, for all $i+1\leq j \leq n$, $\rho(x_j)=\alpha_{j}x_j+f_{j}(x_1,\ldots ,x_{j-1})$ where $\alpha_{j}\in k^{\ast}$ and $f_{j}(x_1,\ldots ,x_{j-1})\in R_{j-1}$.
    \end{lem}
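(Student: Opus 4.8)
The plan is to argue by induction on $n$, peeling off the top variable $x_n$ at each stage and reducing to the analogous statement for the derivation $d_{n-1}$ on $R_{n-1}$. The hypotheses (i)--(iii) together with $\deg_{x_i}g_i\leq l$ depend only on $d_i$ and $g_i$, so they are inherited verbatim by the pair $(R_{n-1},d_{n-1})$, and the induction is legitimate.

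For the base case $n=i+1$, the only index to treat is $j=i+1$, and \Cref{x_n lemma}(1) already gives $\rho(x_{i+1})=x_{i+1}+\beta$ with $\beta\in k$; this is of the required form with $\alpha_{i+1}=1$ and $f_{i+1}=\beta\in R_i$. For the inductive step, assume $n>i+1$ and that the statement holds for $R_{n-1}$ equipped with $d_{n-1}$. First, \Cref{x_n lemma} settles the index $j=n$, since $\rho(x_n)=\alpha x_n+f_{n0}(x_1,\dots,x_{n-1})$ with $\alpha\in k^{\ast}$ and $f_{n0}\in R_{n-1}$; moreover it gives $\rho(x_j)\in R_{n-1}$ for every $i+1\leq j\leq n-1$ (using part (1) for $j=i+1$ and part (2) for $i+2\leq j\leq n-1$), while $\rho(x_j)=x_j$ for $j\leq i$. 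Hence $\rho$ maps $R_{n-1}$ into $R_{n-1}$. Since $d_n|_{R_{n-1}}=d_{n-1}$ and $\rho$ commutes with $d_n$, the restriction $\rho|_{R_{n-1}}$ commutes with $d_{n-1}$ on $R_{n-1}$.

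The key remaining point is that $\rho|_{R_{n-1}}$ is genuinely an \emph{automorphism} of $R_{n-1}$, not merely an endomorphism. To see this, apply \Cref{x_n lemma} to $\rho^{-1}\in Aut(R_n)_{d_n}$: the same conclusions give $\rho^{-1}(x_j)\in R_{n-1}$ for all $1\leq j\leq n-1$, so $\rho^{-1}$ also preserves $R_{n-1}$. As $\rho^{-1}\circ\rho=\mathrm{id}$ on $R_n$ and both maps restrict to endomorphisms of $R_{n-1}$, the restrictions $\rho|_{R_{n-1}}$ and $\rho^{-1}|_{R_{n-1}}$ are mutually inverse, whence $\rho|_{R_{n-1}}\in Aut(R_{n-1})_{d_{n-1}}$. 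Applying the induction hypothesis to $\rho|_{R_{n-1}}$ yields $\rho(x_j)=\alpha_j x_j+f_j(x_1,\dots,x_{j-1})$ with $\alpha_j\in k^{\ast}$ and $f_j\in R_{j-1}$ for all $i+1\leq j\leq n-1$; combined with the form of $\rho(x_n)$ already obtained, this completes the induction.

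I expect the main obstacle to be precisely this surjectivity step: before invoking the induction hypothesis one must confirm that $\rho$ really does restrict to an automorphism of $R_{n-1}$, and the clean way to do so is to run \Cref{x_n lemma} on $\rho^{-1}$ as well, which is available because the isotropy group is closed under inverses. Once $x_n$ has been stripped off and this restriction is in place, the remainder is routine bookkeeping driven by the induction.
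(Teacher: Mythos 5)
Your proof is correct, and it is organized genuinely differently from the paper's. The paper does not restrict $\rho$ to $R_{n-1}$ and induct on the number of variables; instead, after \Cref{x_n lemma} it reruns the coefficient-comparison arguments of that lemma with $x_{n-1}$ in place of $x_n$ (its steps (C) and (D)) and then repeats the process for $x_{n-2},\dots,x_{i+2}$ --- an unrolled version of your induction carried out entirely inside $R_n$ with the single derivation $d_n$. At the one point where the paper needs more than formal manipulation (ruling out the vanishing of the leading coefficient $f_{(n-1)1}$), it invokes only the injectivity of $\rho|_{R_{n-1}}$: if all the relevant leading coefficients vanished, $R_{n-1}$ would map injectively into $R_{n-2}$, which is impossible. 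Your route instead needs the stronger statement that $\rho|_{R_{n-1}}$ is an \emph{automorphism} of $R_{n-1}$ commuting with $d_{n-1}$, since the induction hypothesis is a statement about $Aut(R_{n-1})_{d_{n-1}}$; your observation that \Cref{x_n lemma} applied to $\rho^{-1}$ shows $\rho^{-1}$ also preserves $R_{n-1}$, so that the two restrictions are mutually inverse, is exactly the right way to close that gap (the same device the paper uses at the end of \Cref{restriction_to_R_i}). You also correctly note the one point that must be checked for the induction to be legitimate, namely that the hypotheses descend to $(R_{n-1},d_{n-1})$; this holds because conditions (i)--(iii) and the bound $\deg_{x_i}g_i\leq l$ concern only $d_i$ and $g_i$. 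What your formulation buys is that the computations of \Cref{x_n lemma} are done once and then reused as a black box; what the paper's buys is that it never has to leave $R_n$ or verify that the restriction is an automorphism of the smaller ring.
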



     \begin{proof} 
     Note that if $n=i+1$ or $n=i+2$, the lemma follows from \Cref{x_n lemma}. So we may assume that $n>i+2$. 
     
     We have $\rho\in$ Aut$(R_n)_{d_n}$. From condition (ii) of \Cref{thm-isotropy}, \Cref{restriction_to_R_i} and condition (iii) of \Cref{thm-isotropy}, we have $\rho(x_j)=x_j$ for all $1\leq j\leq i$. Also, from \Cref{x_n lemma}, we have:
     \begin{itemize}
        \item $\rho(x_{i+1})=x_{i+1}+\beta$ for some $\beta \in k$, 
        \item $\rho(x_j)=f_{j0}\in R_{n-1}$ for all $i+2\leq j\leq n-1$, and
        \item $\rho(x_n)=f_{n1}x_n+f_{n0}$ where $f_{n1}\in k^{\ast}$ and $f_{n0}\in R_{n-1}$.
      \end{itemize}

    We will prove that $\rho(x_{n-1})=f_{(n-1)1}x_{n-1}+f_{(n-1)0}$ where $f_{(n-1)1}\in k^{\ast}$ and $f_{(n-1)0}\in R_{n-2}$, and $\rho(x_j)\in R_{n-2}$ for all $i+2\leq j\leq n-2$. Then the rest of the proof follows similarly.  

    \textbf{(C)} 
    We first show by induction that for all $i+2\leq j \leq n-1$, $\rho(x_{j})=f_{j1}x_{n-1}+f_{j0}$ where $f_{j1}\in k$ and $f_{j0}\in R_{n-2}$. The proof is similar to that of (2)(A) of \Cref{x_n lemma}. 

    Note that $\rho(x_{i+2})\in R_{n-1}$. Let $\rho(x_{i+2})=\sum_{m=0}^{t} f_{m} x_{n-1}^j$ where $f_t\neq 0$ and $f_m \in R_{n-2}$ for all $m$. If possible, let $t\geq 2$. From $d_n(\rho(x_{i+2}))=\rho(d_n(x_{i+2}))=g_{i+1}(x_{i+1}+\beta)$, where $\beta\in k$, we get that $d_n(f_t)=0$ and $d_n(f_{t-1})+tf_tg_{n-2}(x_{n-2})=0$. Then $f_t\in k^{\ast}$ and $f_{t-1}+tf_tx_{n-1}\in k$, which is a contradiction. Hence $t\leq 1$. Now, if $t=1$, then it follows that $\rho(x_{i+2})=f_1x_{n-1}+f_0$ where $f_1\in k^{\ast}$ and $f_0\in R_{n-2}$, and if $t=0$ then $\rho(x_{i+2})=f_0\in R_{n-2}$. Hence $\rho(x_{i+2})=f_1x_{n-1}+f_0$ where $f_1\in k$ and $f_0\in R_{n-2}$. 

    Proceeding inductively, as in (2)(A), we get that $\rho(x_j)=f_{j1}x_{n-1}+f_{j0}$ where $f_{j1}\in k$ and $f_{j0}\in R_{n-2}$, for all $i+2\leq j \leq n-1$. 

    \textbf{(D)} From (C) we have, for all $i+2\leq j \leq n-1$, $\rho(x_{j})=f_{j1}x_{n-1}+f_{j0}$ where $f_{j1}\in k$ and $f_{j0}\in R_{n-2}$. Now we show that $\rho(x_j)=f_{j0}\in R_{n-2}$ for all $i+2\leq j\leq n-2$ and $\rho(x_{n-1})=f_{(n-1)1} x_{n-1}+f_{(n-1)0}$ where $f_{(n-1)1}\in k^{\ast}$ and $f_{(n-1)0}\in R_{n-2}$. (The proof is similar to that of (2)(B)). 


    Recall that $\rho(x_{n-1})=f_{(n-1)1}x_{n-1}+f_{(n-1)0}$ where $f_{(n-1)1} \in k$ and $f_{(n-1)0}\in R_{n-1}$, and $\rho(x_{n-2})=f_{(n-2)1}x_{n-1}+f_{(n-2)0}$ where $f_{(n-2)1}\in k$ and $f_{(n-2)0}\in R_{n-2}$. Suppose $f_{(n-1)1}=0$. From $d_n(\rho(x_{n-1}))=\rho(d_n(x_{n-1}))$, we get that $d_n(f_{(n-1)0})=g_{n-2}(f_{(n-2)1}x_{n-1}+f_{(n-2)0})$. Since $d_n(f_{(n-1)0})\in R_{n-2}$ and $g_{n-2}(x_{n-2})\in k[x_{n-2}]\setminus k$, it follows that $f_{(n-2)1}=0$. Similarly, we get that $f_{j1}=0$ for all $i+2\leq j\leq n-2$, i.e., $\rho(x_j)\in R_{n-2}$ for all $i+2\leq j\leq n-2$. But this contradicts the fact that $\rho|_{R_{n-1}}$ is an injective map. Hence $f_{(n-1)1}\neq 0$, i.e., $\rho(x_{n-1})=f_{(n-1)1}x_{n-1}+f_{(n-1)0}$ where $f_{(n-1)1}\in k^{\ast}$ and $f_{(n-1)0}\in R_{n-2}$.

    Again by using $d_n(\rho(x_{n-1}))=\rho(d_n(x_{n-1}))$, we get that $$f_{(n-1)1}g_{n-2}(x_{n-2})+d_n(f_{(n-1)0})=g_{n-2}(f_{(n-2)1}x_{n-1}+f_{(n-2)0}).$$ If $f_{(n-2)1}\neq 0$, then we get a contradiction as $f_{(n-1)1}g_{n-2}+d_n(f_{(n-1)0})\in R_{n-2}$. Hence $f_{(n-2)1}=0$, i.e., $\rho(x_{n-2})=f_{(n-2)0}\in R_{n-2}$. Proceeding inductively we get that $\rho(x_j)=f_{j0}\in R_{n-2}$ for all $i+2\leq j\leq n-2$.

    Thus we have proved our claim that $\rho(x_{n-1})=f_{(n-1)1}x_{n-1}+f_{(n-1)0}$ where $f_{(n-1)1}\in k^{\ast}$ and $f_{(n-1)0}\in R_{n-2}$, and $\rho(x_j)=f_{j0}\in R_{n-2}$ for all $i+2\leq j\leq n-2$. 

    Repeating this process, (as in (C) and (D)), successively for $\rho(x_{n-2})$, $\rho(x_{n-3})$, $\ldots ,\rho(x_{i+2})$ we get the desired result. 
    \end{proof}

    \begin{lem}\label{final lemma}
    Let $i,n$, $R_i$, $R_n$, $d_i$ and $d_n$ be as above. Let $\deg_{x_i}g_i(x_i)\leq l$ for some $l\in \mathbb{N} \cup \{\infty\}$. Suppose $d_i$ satisfies conditions (i), (ii), and (iii) of \Cref{thm-isotropy}. Let $\rho\in Aut(R_n)_{d_n}$. Then $\rho(x_n)=x_n+c$ where $c\in k$, and $\rho(x_j)=x_j$ for all $1\leq j \leq n-1$.
    \end{lem}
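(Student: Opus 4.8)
The plan is to combine the triangular structure already obtained with a single clean induction driven by the commutation relations and \Cref{lemma no units}. Recall that \Cref{x_j_maps_till_x_j} gives $\rho(x_j)=x_j$ for $1\leq j\leq i$, that $\rho(x_{i+1})=x_{i+1}+\beta$ with $\beta\in k$, and more generally $\rho(x_j)=\alpha_j x_j+f_j$ with $\alpha_j\in k^{\ast}$ and $f_j\in R_{j-1}$ for $i+1\leq j\leq n$ (so that $\alpha_{i+1}=1$ and $f_{i+1}=\beta$). Thus the goal reduces to showing $\alpha_j=1$ for all $j$, $f_j=0$ for $i+1\leq j\leq n-1$, and $f_n\in k$.

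For each $j$ with $i+2\leq j\leq n$, I would apply $d_n\rho=\rho d_n$ to $x_j$ and use $d_n(x_j)=g_{j-1}(x_{j-1})$, obtaining
\begin{align*}
\alpha_j\, g_{j-1}(x_{j-1})+d_n(f_j)=g_{j-1}(\alpha_{j-1}x_{j-1}+f_{j-1}).
\end{align*}
The induction hypothesis I carry is that $\alpha_{j-1}=1$ and $f_{j-1}\in k$; the base case $j=i+2$ holds since $\alpha_{i+1}=1$ and $f_{i+1}=\beta\in k$. Under this hypothesis the right-hand side becomes $g_{j-1}(x_{j-1}+f_{j-1})\in k[x_{j-1}]$, so the displayed relation rearranges to
\begin{align*}
d_n(f_j)=g_{j-1}(x_{j-1}+f_{j-1})-\alpha_j\, g_{j-1}(x_{j-1})\in k[x_{j-1}].
\end{align*}
Since $f_j\in R_{j-1}$ by \Cref{x_j_maps_till_x_j}, \Cref{lemma no units} applied with index $j-1$ forces $f_j\in k$ and $d_n(f_j)=0$, hence $g_{j-1}(x_{j-1}+f_{j-1})=\alpha_j\, g_{j-1}(x_{j-1})$.

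The final step is purely one-variable: comparing leading coefficients in $x_{j-1}$ gives $\alpha_j=1$ (as $f_{j-1}$ is a constant it does not affect the leading coefficient), and then $g_{j-1}(x_{j-1}+f_{j-1})=g_{j-1}(x_{j-1})$; comparing the coefficient of the next-highest power of $x_{j-1}$ and using $\deg_{x_{j-1}}g_{j-1}\geq 1$ together with the characteristic-zero hypothesis forces $f_{j-1}=0$ (a nonconstant polynomial over a field of characteristic zero is not periodic). Thus each inductive step simultaneously advances the hypothesis to level $j$ (establishing $\alpha_j=1$ and $f_j\in k$) and retroactively shows $f_{j-1}=0$. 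Running $j$ from $i+2$ to $n$ then yields $\alpha_j=1$ for all $i+1\leq j\leq n$, $f_m=0$ for $i+1\leq m\leq n-1$ (so $\rho(x_m)=x_m$), and $f_n\in k$ (so $\rho(x_n)=x_n+c$ with $c=f_n$); combined with $\rho(x_m)=x_m$ for $m\leq i$ this is exactly the assertion.

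The main obstacle, and the reason the induction must be phrased with $f_{j-1}\in k$ as the hypothesis rather than merely $f_{j-1}\in R_{j-2}$, is guaranteeing that the right-hand side $g_{j-1}(\alpha_{j-1}x_{j-1}+f_{j-1})$ lands in $k[x_{j-1}]$, since this is precisely the form required to invoke \Cref{lemma no units}. If $f_{j-1}$ were allowed to involve $x_1,\dots,x_{j-2}$, the no-units lemma could not be applied and the chain would break; everything else, namely the leading-coefficient comparison and the non-periodicity of $g_{j-1}$, is routine once this membership is secured.
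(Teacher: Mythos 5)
Your proposal is correct and follows essentially the same route as the paper's own proof: both run an induction on $j$ with the hypothesis $\alpha_{j-1}=1$, $f_{j-1}\in k$, use the commutation relation to get $d_n(f_j)\in k[x_{j-1}]$, invoke \Cref{lemma no units} to force $f_j\in k$ and $d_n(f_j)=0$, and then compare coefficients of $g_{j-1}(x_{j-1}+f_{j-1})=\alpha_j g_{j-1}(x_{j-1})$ to conclude $\alpha_j=1$ and $f_{j-1}=0$. The only cosmetic difference is that you spell out the characteristic-zero non-periodicity argument for $f_{j-1}=0$, which the paper leaves implicit.
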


    \begin{proof}
        Note that as before we have $\rho(x_j)=x_j$ for all $1\leq j\leq i$.

    By \Cref{x_n lemma}, $\rho(x_{i+1})=x_{i+1}+\beta$ where $\beta\in k$. Suppose $n=i+1$ then we are done. Thus, we may assume $n>i+1$.
        
    Recall that from \Cref{x_n lemma} and \Cref{x_j_maps_till_x_j},  we have $\rho(x_{i+1})=x_{i+1}+\beta$ and  $\rho(x_j)=\alpha_{j}x_j+f_{j}(x_1,\ldots ,x_{j-1})$ where $\beta\in k$, $\alpha_{j}\in k^{\ast}$ and $f_{j}(x_1,\ldots ,x_{j-1})\in R_{j-1}$ for all $i+2\leq j\leq n$.

    From $d_n(\rho(x_{i+2}))=\rho(d_n(x_{i+2}))$, we get that 
    
    \begin{align}\label{eqn2.12}
        \alpha_{i+2}\, g_{i+1}(x_{i+1})+d_n(f_{i+2})=g_{i+1}(x_{i+1}+\beta).
    \end{align}
        
    Then $d_n(f_{i+2})\in k[x_{i+1}]$. Since $f_{i+2}\in R_{i+1}$, we get that $f_{i+2}\in k$, and thus $d_n(f_{i+2})=0$. Then \Cref{eqn2.12} becomes $\alpha_{i+2}\, g_{i+1}(x_{i+1})=g_{i+1}(x_{i+1}+\beta)$. Since $g_{i+1}(x_{i+1})\in k[x_{i+1}]\setminus k$, we get that $\alpha_{i+2}=1$. Substituting $\alpha_{i+2}=1$, we have $g_{i+1}(x_{i+1})=g_{i+1}(x_{i+1}+\beta)$. Thus $\beta=0$. So, $\rho(x_{i+1})=x_{i+1}$ and $\rho(x_{i+2})=x_{i+2}+f_{i+2}$ where $f_{i+2}\in k$. 

    Suppose, for some $j\in \{i+1,\ldots,n-1\}$, we have $\rho(x_s)=x_s$ for all $1\leq s\leq j-1$ and $\rho(x_j)=x_j+f_{j}$ where $f_j\in k$. Then we show that $\rho(x_{j})=x_{j}$  and $\rho(x_{j+1})=x_{j+1}+f_{j+1}$ where $f_{j+1}\in k$. 

    Note that we have $\rho(x_{j+1})=\alpha_{j+1}x_{j+1}+f_{j+1}$ where $\alpha_{j+1}\in k^{\ast}$ and $f_{j+1}\in R_{j}$. Now, from $d_n(\rho(x_{j+1}))=\rho(d_n(x_{j+1}))$, we get that 
        
    \begin{align}\label{eqn2.13}
        \alpha_{j+1}\, g_{j}(x_{j})+d_n(f_{j+1})=g_{j}(x_{j}+f_j).
    \end{align}
        
    Then $d_n(f_{j+1})\in k[x_{j}]$. Since $f_{j+1}\in R_{j}$, we get that $f_{j+1}\in k$, and thus $d_n(f_{j+1})=0$. Then \Cref{eqn2.13} becomes $\alpha_{j+1}\, g_{j}(x_j)=g_{j}(x_{j}+f_j)$. Since $g_{j}(x_j)\in k[x_{j}]\setminus k$, we get that $\alpha_{j+1}=1$. Substituting $\alpha_{j+1}=1$, we get that $g_{j}(x_{j})=g_{j}(x_{j}+f_j)$. Thus $f_j=0$. So, $\rho(x_{j})=x_{j}$ and $\rho(x_{j+1})=x_{j+1}+f_{j+1}$ where $f_{j+1}\in k$.     

    By the above induction we get that $\rho(x_n)=x_n+c$ where $c\in k$, and $\rho(x_j)=x_j$ for all $1\leq j \leq n-1$. This completes the proof.
    \end{proof}

    \textbf{Proof of \Cref{thm-isotropy}}: By \Cref{final lemma}, we get that $$Aut(R_n)_{d_n}\subseteq\{(x_1,x_2,\dots,x_n+c)|c\in k\}.$$ It is easy to check that the other containment also holds. Hence, the proof follows.\qed

    We next recall an important result due to Shamsuddin \cite{sam}, a proof of which is also provided in \cite[Theorem 13.2.1]{n94}.

    \begin{thm}{\cite{sam}}\label{sam result}
        Let $R$ be a ring containing $\mathbb{Q}$ and let $d$ be a simple derivation of $R$. Extend the derivation $d$ to a derivation $\tilde{d}$ of the polynomial ring $R[t]$ by setting $\tilde{d}(t)=at+b$ where $a,b\in R$. Then the following two conditions are equivalent.
    \begin{enumerate}
        \item $\tilde{d}$ is simple.
        \item There exist no elements $r$ of $R$ such that $d(r) = ar + b$.
    \end{enumerate}
    \end{thm}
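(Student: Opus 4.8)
The plan is to prove the equivalence by contraposition in both directions, with $(1)\Rightarrow(2)$ being elementary and $(2)\Rightarrow(1)$ carrying the real content.

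For $(1)\Rightarrow(2)$ I would argue the contrapositive: if there exists $r\in R$ with $d(r)=ar+b$, then $\tilde{d}$ is not simple. The point is that the principal ideal $(t-r)$ of $R[t]$ is $\tilde{d}$-invariant. Indeed, $\tilde{d}(t-r)=\tilde{d}(t)-d(r)=(at+b)-(ar+b)=a(t-r)$, so by the Leibniz rule $\tilde{d}$ sends every multiple of $t-r$ back into $(t-r)$; since $t-r$ is neither zero nor a unit, this is a proper nonzero invariant ideal, contradicting simplicity of $\tilde{d}$.

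For $(2)\Rightarrow(1)$ I again pass to the contrapositive: assuming $\tilde{d}$ is \emph{not} simple, I must produce $r\in R$ with $d(r)=ar+b$. Fix a proper nonzero ideal $I$ of $R[t]$ with $\tilde{d}(I)\subseteq I$. First I would note that $I\cap R$ is a $d$-invariant ideal of $R$ (if $s\in I\cap R$ then $d(s)=\tilde{d}(s)\in I\cap R$); since $d$ is simple and $I$ is proper, this forces $I\cap R=0$, so the least degree $m$ of a nonzero element of $I$ satisfies $m\geq 1$. The crucial step is to show that $I$ contains a \emph{monic} polynomial of degree $m$. Using $\tilde{d}(t^m)=ma\,t^m+mb\,t^{m-1}$, the degree-$m$ coefficient of $\tilde{d}(f)$ for a degree-$m$ element $f\in I$ with leading coefficient $c$ equals $d(c)+mac$; since $\tilde{d}(f)\in I$, this coefficient is again a leading coefficient of a member of $I$ (or $0$), so it lies in the set $\mathfrak{a}$ of leading coefficients of degree-$m$ members of $I$ together with $0$. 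As $\mathfrak{a}$ is an ideal of $R$ and $mac\in\mathfrak{a}$, we deduce $d(c)\in\mathfrak{a}$, so $\mathfrak{a}$ is a nonzero $d$-invariant ideal. By simplicity of $d$ we get $\mathfrak{a}=R$, and hence a monic $f=t^m+c_{m-1}t^{m-1}+\cdots+c_0\in I$.

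Finally I would exploit minimality of $m$. Since the degree-$m$ coefficient of $\tilde{d}(f)$ is $d(1)+ma=ma$, the element $\tilde{d}(f)-ma\,f$ lies in $I$ and has degree strictly less than $m$, so it vanishes and $\tilde{d}(f)=ma\,f$. Comparing the coefficients of $t^{m-1}$ in this identity (again via $\tilde{d}(t^{m})=ma\,t^{m}+mb\,t^{m-1}$) gives $d(c_{m-1})+(m-1)a\,c_{m-1}+mb=ma\,c_{m-1}$, that is, $d(c_{m-1})=a\,c_{m-1}-mb$. Setting $r=-c_{m-1}/m$---which is permissible because $\mathbb{Q}\subseteq R$ and $m\geq 1$---yields $d(r)=ar+b$, completing the argument. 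The main obstacle is securing the monic element of minimal degree: the delicate verification is that the leading-coefficient set is a genuine $d$-stable ideal, so that simplicity of $d$ can be invoked to normalize the leading coefficient to $1$. Thereafter minimality of the degree and a single coefficient comparison finish the proof, with the hypothesis $\mathbb{Q}\subseteq R$ entering only to divide by $m$ at the last step.
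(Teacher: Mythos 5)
Your proof is correct. The paper itself does not prove this statement --- it is quoted from Shamsuddin's thesis with a pointer to Nowicki's book (Theorem 13.2.1 of \cite{n94}) for a proof --- and your argument is essentially that standard one: the ideal $(t-r)$ for the easy direction, and for the converse the reduction to $I\cap R=0$, the leading-coefficient ideal argument to produce a monic element $f$ of minimal degree $m$, the identity $\tilde{d}(f)=ma\,f$ forced by minimality, and the comparison of $t^{m-1}$-coefficients with $r=-c_{m-1}/m$ (using $\mathbb{Q}\subseteq R$). No gaps.
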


    \begin{cor}\label{d_n simple}
        Let $n$, $R_i$, $R_n$, $d_i$ and $d_n$ be as on \Cref{thm-isotropy}. Let $\deg_{x_i}g_i(x_i)\leq l$ for some $l\in \mathbb{N} \cup \{\infty\}$. Suppose $d_i$ satisfies the following condition: If $d_i(r)=g(x_i)$ for some $r\in R_i$ and $g(x_i)\in k[x_i]$ with $\deg_{x_i}g(x_i)\leq l$, then $r\in k$ and $g=0$. If $d_i$ is a simple derivation, then $d_{n}$ is a simple derivation. 
    \end{cor}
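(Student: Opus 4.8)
The plan is to build $d_n$ up from $d_i$ one variable at a time and to invoke Shamsuddin's criterion (\Cref{sam result}) at each stage. Recall that for $i\le j\le n-1$ the derivation $d_{j+1}$ on $R_{j+1}=R_j[x_{j+1}]$ extends $d_j$ by $d_{j+1}(x_{j+1})=g_j(x_j)$; this is exactly a Shamsuddin extension with $t=x_{j+1}$, $a=0$, and $b=g_j(x_j)\in R_j$. Since $k$ has characteristic zero, each $R_j$ contains $\mathbb{Q}$, so \Cref{sam result} applies. I would prove by induction on $j$, for $i\le j\le n$, that $d_j$ is a simple derivation of $R_j$; the case $j=n$ is the desired assertion.

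For the base case, $d_i$ is simple by hypothesis. For the inductive step, assume $d_j$ is simple. By \Cref{sam result}, the extension $d_{j+1}$ is simple if and only if there is no $r\in R_j$ with $d_j(r)=0\cdot x_{j+1}+g_j(x_j)=g_j(x_j)$. Thus the whole corollary reduces to ruling out solutions $r\in R_j$ of the equation $d_j(r)=g_j(x_j)$ at each step.

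This non-existence is precisely where the hypotheses enter, and it splits into two cases. When $j=i$, the element $g_i(x_i)$ lies in $k[x_i]\setminus k$ and satisfies $\deg_{x_i}g_i\le l$; hence condition (i) on $d_i$ forces any $r$ with $d_i(r)=g_i(x_i)$ to satisfy $r\in k$ and $g_i=0$, contradicting $g_i\notin k$. When $i+1\le j\le n-1$, I would use that $d_j=d_n|_{R_j}$ and $g_j(x_j)\in k[x_j]\setminus k$, so \Cref{lemma no units} (applied with index $j$) shows that $d_j(r)=g_j(x_j)$ would again force $r\in k$ and $g_j=0$, a contradiction. In either case no such $r$ exists, so $d_{j+1}$ is simple, completing the induction.

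I expect no serious obstacle here: the only content beyond bookkeeping is the verification that $d_j(r)=g_j(x_j)$ has no solution in $R_j$, which is supplied by \Cref{lemma no units} (together with condition (i) at the initial step). The remaining point, and the one worth stating carefully, is simply to recognize that each successive extension is of Shamsuddin type with $a=0$, so that \Cref{lemma no units} exactly certifies condition (2) of \Cref{sam result} at every stage.
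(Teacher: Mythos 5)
Your proposal is correct and follows essentially the same route as the paper: build up $d_n$ one variable at a time, apply Shamsuddin's criterion (\Cref{sam result}) at each stage with $a=0$, rule out $d_i(r)=g_i(x_i)$ via condition (i) at the first step, and use \Cref{lemma no units} for the later steps. The paper only writes out the first extension explicitly and says the rest "follows similarly using \Cref{lemma no units}"; you have simply made that induction explicit.
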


    \begin{proof}
        We just prove it for $d_{i+1}$, then the rest of the proof follows similarly using \Cref{lemma no units}. Recall that $d_{i+1}=d_i+g_i(x_{i})\partial_{x_{i+1}}$ where $\deg_{x_i}g_i(x_i)\leq l$ and $g_{i}(x_i)\in k[x_i]\setminus k$. Thus $d_{i+1}(x_{i+1})=g_i(x_i)$. Now, let there exists an $r\in R_i$ such that $d_i(r)=g_i(x_i)$. Since $\deg_{x_i}g_i(x_i)\leq l$ we get that $r\in k$ and thus $g_i(x_i)=0$. But $g_i(x_i)\notin k$ and hence we get a contradiction. Then the derivation $d_{i+1}$ is simple by \Cref{sam result}.
    \end{proof}

   \begin{remark}
        In the above corollary, in order to prove that $d_{i+1}$ is simple, it is enough to assume that $d_i$ is simple, and $g_i(x_i)$ does not belong to the image of $d_i$. 
    \end{remark}

    In \cite{MP17}, the authors proved that for any simple derivation of $R_2$, the corresponding isotropy group is trivial. In view of this result, we have the following corollaries.

\begin{cor}\label{isotropy-for-2-var}
         Let $n$, $R_i$,$R_n$, $d_i$ and $d_n$ be as on \Cref{thm-isotropy}. 
         Let $i=2$ and assume that $\deg_{x_2}g_2(x_2)\leq l$ for some $l\in \mathbb{N} \cup \{\infty\}$. 
         Suppose $d_2$ satisfies the following conditions:
         \begin{enumerate}
             \item[(i)] If $d_2(r)=g(x_2)$ for some $r\in R_2$ and $g(x_2)\in k[x_2]$ with $\deg_{x_2}g(x_2)\leq l$, then $r\in k$ and $g=0$, and 
             \item[(ii)] there exists a matrix $A_{d_2}$ corresponding to $d_2$ satisfying the condition $(\ast)$, as in \Cref{$*$}.
        \end{enumerate} 
        If $d_2$ is a simple derivation of $R_2$, then $d_n$ is simple and the isotropy group is $Aut(R_n)_{d_n}=\{(\ x_1,x_2,\dots,x_n+c)|c\in k\}$. 
    \end{cor}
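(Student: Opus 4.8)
The plan is to obtain this corollary as a direct specialization of \Cref{thm-isotropy} to the case $i=2$, supplemented by the Mendes--Pan triviality theorem and \Cref{d_n simple}. Observe that the hypotheses stated here are exactly conditions (i) and (ii) of \Cref{thm-isotropy} with $i=2$; the only ingredient of \Cref{thm-isotropy} that does not appear explicitly in the statement is condition (iii), namely $Aut(R_2)_{d_2}=\{\text{Id}\}$. So the first and essential step is to supply this missing condition from the assumption that $d_2$ is simple.

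First I would invoke the result of Mendes and Pan \cite{MP17}, which asserts that the isotropy group of any simple derivation of $R_2=k[x_1,x_2]$ is trivial. Since $d_2$ is assumed simple, this immediately gives $Aut(R_2)_{d_2}=\{\text{Id}\}$, so condition (iii) of \Cref{thm-isotropy} holds automatically. As conditions (i) and (ii) are part of the hypotheses of the corollary, all three conditions of \Cref{thm-isotropy} are satisfied with $i=2$, and we conclude at once that $Aut(R_n)_{d_n}=\{(x_1,x_2,\dots,x_n+c)\mid c\in k\}$.

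It then remains to establish that $d_n$ is simple, and for this I would apply \Cref{d_n simple}. Its hypotheses require precisely that $d_2$ be simple and that $d_2$ satisfy condition (i) (that the only $r\in R_2$ with $d_2(r)=g(x_2)$ and $\deg_{x_2}g\le l$ is $r\in k$ with $g=0$). Both hold by assumption, so \Cref{d_n simple} yields the simplicity of $d_n$, completing the argument. Since this is a formal assembly of already-established results, there is no genuine obstacle; the only point worth emphasizing is that the Mendes--Pan theorem applies to \emph{every} simple derivation of $R_2$ and therefore furnishes condition (iii) for free. This is exactly why the case $i=2$ is cleaner than the general setting of \Cref{thm-isotropy}, where the triviality of $Aut(R_i)_{d_i}$ must be imposed as a separate hypothesis.
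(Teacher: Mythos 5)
Your proposal is correct and follows exactly the paper's own argument: invoke Mendes--Pan to get $Aut(R_2)_{d_2}=\{\text{Id}\}$ (condition (iii)), apply \Cref{thm-isotropy} for the isotropy group, and apply \Cref{d_n simple} for the simplicity of $d_n$. No differences worth noting.
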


    \begin{proof}
        Since $d_2$ is a simple, $Aut(R_2)_{d_2}=\{\text{Id}\}$ by \cite[Theorem 1]{MP17}. Thus, from \Cref{thm-isotropy}, the isotropy group is  $Aut(R_n)_{d_n}=\{(x_1,x_2,\dots,x_n+c)|c\in k\}$. The derivation $d_n$ is simple by \Cref{d_n simple}.
    \end{proof}

    \begin{cor}\label{isotropy-new-example}
        Let $m_1$ and $m_2$ be positive integers such that $m_2$ does not divide $m_1$. Let $d_2= (1+{x_1}^{m_2}x_2)\partial_{x_1}+ x_1^{m_1}\partial_{x_2}$. Let $n>2$ and let $d_n:=d_2+g_2(x_2)\partial_{x_{i+1}}+\dots+g_{n-1}(x_{n-1})\partial_{x_n}$,
        where $g_j(x_j)\in k[x_j]\backslash k$ such that $2\leq j\leq n-1$.
        Then $d_n$ is simple and $Aut(R_n)_{d_n}=\{(\ x_1,x_2,\dots,x_n+c)|c\in k\}.$ 
    \end{cor}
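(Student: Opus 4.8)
The plan is to realize this corollary as a direct instance of \Cref{isotropy-for-2-var} with $i=2$ and $l=\infty$. For that I must verify, for the specific derivation $d_2=(1+x_1^{m_2}x_2)\partial_{x_1}+x_1^{m_1}\partial_{x_2}$, the three hypotheses of that corollary: that $d_2$ is simple, that it satisfies condition (i), and that there is a matrix $A_{d_2}$ corresponding to $d_2$ satisfying $(\ast)$. Since the $g_j$ are allowed to be arbitrary elements of $k[x_j]\setminus k$, I take $l=\infty$, for which the requirement $\deg_{x_2}g_2\leq l$ is automatic.

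The matrix condition is the easiest to dispose of. Reading off a top-degree monomial from each $d_2(x_j)$, the value $d_2(x_1)=1+x_1^{m_2}x_2$ contributes the monomial $x_1^{m_2}x_2$ (its unique total-degree term, of degree $m_2+1$) and $d_2(x_2)=x_1^{m_1}$ contributes $x_1^{m_1}$, so the only admissible choice is
\[
A_{d_2}=\begin{pmatrix} m_2 & 1\\ m_1 & 0\end{pmatrix}.
\]
Then $(A_{d_2}-I)Y\leq 0$ is the pair of inequalities $(m_2-1)y_1+y_2\leq 0$ and $m_1y_1-y_2\leq 0$. Because $m_2\nmid m_1$ forces $m_2\geq 2$, in the first inequality both summands $(m_2-1)y_1$ and $y_2$ are nonnegative on $\mathbb{Z}_{\geq 0}^2$, so it already forces $y_1=y_2=0$; hence $(\ast)$ holds.

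The two remaining hypotheses are exactly the substance of the final section, and this is where I expect the real work to lie. I would invoke \Cref{lemma1}, proved in the final section, which asserts that the image of $d_2$ contains no nonzero element of $k[x_2]$, together with the accompanying statement that $d_2$ is simple; this last is where the hypothesis $m_2\nmid m_1$ is genuinely used, via the weight/degree analysis of that section. Granting these, condition (i) with $l=\infty$ follows by combining two facts. First, if $d_2(r)=g(x_2)$ with $g\in k[x_2]$, then $g(x_2)$ lies in the image of $d_2$, so $g=0$ by \Cref{lemma1}. Second, $d_2(r)=0$ then forces $r\in k$, since the kernel of a simple derivation of the domain $R_2$ is a field: if $0\neq a\in\ker d_2$ then $aR_2$ is a $d_2$-invariant ideal, hence all of $R_2$ by simplicity, so $a$ is a unit and $a\in k^{\ast}$; as the units of $R_2$ are $k^{\ast}$, we get $\ker d_2=k$.

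With all three hypotheses in hand, \Cref{isotropy-for-2-var} applies and yields simultaneously that $d_n$ is simple and that $Aut(R_n)_{d_n}=\{(x_1,x_2,\ldots,x_n+c)\mid c\in k\}$. Thus the only genuinely hard input is the pair of assertions about $d_2$ supplied by the final section — its simplicity and the absence of $k[x_2]\setminus\{0\}$ from its image; on the isotropy side everything reduces, through \Cref{isotropy-for-2-var} and ultimately \Cref{thm-isotropy}, to the one-line matrix verification above.
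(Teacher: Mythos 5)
Your proposal is correct and follows essentially the same route as the paper: reduce to \Cref{isotropy-for-2-var} with $l=\infty$, invoke \Cref{lemma1} for condition (i), verify $(\ast)$ for the same matrix $A_{d_2}=\begin{pmatrix} m_2 & 1\\ m_1 & 0\end{pmatrix}$ (the paper adds the two inequalities to get $(m_1+m_2-1)t_1\leq 0$, while you use $m_2\geq 2$ to conclude from the first inequality alone; both work), and quote the simplicity of $d_2$. Two minor remarks: the simplicity of $d_2$ is not established by the weight analysis of the final section but is imported from \cite[Theorems 4.1 and 6.1]{k14}, and your detour through $\ker d_2=k$ is sound but unnecessary since \Cref{lemma1} already delivers $f\in k$ together with $g=0$.
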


    \begin{proof} For $l=\infty$, the condition $(i)$ of \Cref{isotropy-for-2-var} is satisfied, see \Cref{lemma1}.

        We now prove that the assumption $(ii)$ of \Cref{isotropy-for-2-var} is also satisfied. We claim that the matrix $A_{d_2}= \begin{pmatrix} m_2 & 1\\ m_1 & 0 \end{pmatrix}$ corresponding to $d_2$ satisfies the condition $(\ast)$, as in \Cref{$*$}. Consider $(A_{d_2}-I)Y\leq 0$ where $I$ is the $2\times 2$ identity matrix and $Y=  \begin{pmatrix} t_1 \\ t_2\end{pmatrix}$ where $t_1, t_2\geq 0$. Then we have \begin{equation}\label{2.9.1} (m_2-1)t_1+t_2 \leq 0, \end{equation} 
             \begin{equation}\label{2.9.2}  m_1t_1-t_2\leq 0. \end{equation} 
    Adding \Cref{2.9.1} and \Cref{2.9.2}, we get that $(m_1+m_2-1)t_1\leq 0$, which implies that $t_1=0$. Now putting $t_1=0$ in \Cref{2.9.1}, we get that $t_2=0$. 
    Thus, the claim holds.

The derivation $d_2$ is simple by \cite[Theorem 4.1 and Theorem 6.1]{k14}. Now, the result follows from \Cref{isotropy-for-2-var}.
    \end{proof}

    \begin{remark}
        In case of $m_1<m_2$, the fact that $d_n$ is simple is known (see \cite[Theorem 5.1]{k14}).   
    \end{remark}

     \begin{cor}\label{isotropy-new-example-1}
      Let $m_1$ and $m_2$ be positive integers such that $m_1\geq 2$. 
      Let $d_2=(1-{x_2}^{m_2} x_1)\partial_{x_1}+{x_1}^{m_1}\partial_{x_2}$ be a $k$-derivation of $R_2$. 
      Let $n>2$ and
      let $d_n:=d_2+g_i(x_i)\partial_{x_{i+1}}+g_{i+1}(x_{i+1})\partial_{x_{i+2}}+\dots+g_{n-1}(x_{n-1})\partial_{x_n}$, where $g_i(x_i)\in k[x_i]\setminus k$ with $\deg_{x_i}g_i(x_i)\leq 1$ and $g_j(x_j)\in k[x_j]\setminus k$ for all $i+1\leq j\leq n-1$, be a $k$-derivation of $R_n$.
        Then $d_n$ is simple and $Aut(R_n)_{d_n}=\{(\ x_1,x_2,\dots,x_n+c)|c\in k\}.$ 
    \end{cor}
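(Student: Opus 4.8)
The plan is to deduce this from \Cref{isotropy-for-2-var}, applied with $i=2$ and $l=1$. Thus I must verify its three inputs for $d_2=(1-x_1x_2^{m_2})\partial_{x_1}+x_1^{m_1}\partial_{x_2}$: the matrix condition, the simplicity of $d_2$, and the image condition (i) for $l=1$. Granting these, \Cref{isotropy-for-2-var} yields simultaneously that $d_n$ is simple and that $\mathrm{Aut}(R_n)_{d_n}$ is the group of translations in $x_n$, which is exactly the assertion.

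For the matrix condition, I would read off the top-degree monomials of $d_2(x_1)=1-x_1x_2^{m_2}$ and $d_2(x_2)=x_1^{m_1}$ to take $A_{d_2}=\begin{pmatrix}1 & m_2\\ m_1 & 0\end{pmatrix}$. Then $(A_{d_2}-I)Y\le 0$ reads $m_2t_2\le 0$ and $m_1t_1-t_2\le 0$; the first forces $t_2=0$ and then the second forces $t_1=0$, so $(\ast)$ holds. The simplicity of $d_2$ I would take from the known results on simple planar derivations (see, e.g., \cite{k14}). Finally, for the image condition, observe first that simplicity gives $\ker d_2=k$: if $0\ne c\in\ker d_2$ then $(c)$ is a $d_2$-stable ideal, hence equals $R_2$, so $c\in k^{\ast}$. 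Consequently the case $g=0$ of condition (i) is immediate, and the whole problem reduces to showing that no nonzero $g=g_0+g_1x_2$ lies in $\mathrm{Im}(d_2)$.

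To prove $g_0+g_1x_2\notin\mathrm{Im}(d_2)$, suppose $d_2(r)=g_0+g_1x_2$ with $r=\sum_{i=0}^N s_i(x_2)x_1^i$ and $s_N\ne0$. Comparing coefficients of powers of $x_1$ gives the three-term system $(k+1)s_{k+1}-kx_2^{m_2}s_k+s_{k-m_1}'=0$ for $k\ge1$, together with $s_1=g$. The top relation forces $s_N\in k^{\ast}$ and, cascading downward, the block $s_{N-m_1+1},\dots,s_N$ consists of constants, while the bottom relations integrate to $s_j=\tfrac1j x_2^{(j-1)m_2}g$ for $1\le j\le m_1$. If $1\le N\le m_1-1$, the relation at $k=N$ reduces to $-Nx_2^{m_2}s_N=0$, which is impossible; if $N=0$ then $s_0'x_1^{m_1}=g$, forcing $r\in k$ and $g=0$; and if $m_1\le N\le 2m_1-1$ the constant block contains the index $j=m_1$, so $\tfrac1{m_1}x_2^{(m_1-1)m_2}g$ must be constant, which forces $g=0$ because $(m_1-1)m_2\ge1$. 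In all these cases $g=0$, and then $\ker d_2=k$ gives $r\in k$.

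The main obstacle is the remaining range $N\ge2m_1$, where the top constant block and the bottom formulas are separated by a long middle range controlled by the coupled recursion, so a naive degree count does not close. Here I would pass to the weight $w(x_1)=m_2+1$, $w(x_2)=m_1$, for which the $w$-leading part of $d_2$ is the homogeneous derivation $\delta=-x_1x_2^{m_2}\partial_{x_1}+x_1^{m_1}\partial_{x_2}$ with ring of constants $k[f_0]$, where $f_0=(m_2+1)x_1^{m_1}+m_1x_2^{m_2+1}$ satisfies $\delta(f_0)=0$ while $d_2(f_0)=m_1(m_2+1)x_1^{m_1-1}$. Since $g$ has $w$-weight at most $m_1$ whereas $\delta$ raises $w$-weight by $m_1m_2$, the $w$-leading part of any solution $r\notin k$ must be a scalar multiple of a power of $f_0$; a descending induction on $w$-weight, peeling off these powers via $d_2(f_0^s)=s\,m_1(m_2+1)f_0^{s-1}x_1^{m_1-1}$ and controlling the cokernel of $\delta$ at each weight, should force $g=0$ and hence $r\in k$. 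Making this descent uniform is the delicate point.
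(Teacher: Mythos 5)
Your overall strategy coincides with the paper's: both proofs reduce the corollary to \Cref{isotropy-for-2-var} with $i=2$ and $l=1$, verify condition $(\ast)$ with the same matrix $A_{d_2}=\begin{pmatrix}1&m_2\\ m_1&0\end{pmatrix}$ and the same two inequalities, and quote the literature for the simplicity of $d_2$ (the paper cites \cite[Theorem 2.6]{ap23}). The divergence is in condition (i): the paper simply cites \cite[Proposition 2.3]{MMS25} for the fact that no nonzero $g(x_2)$ with $\deg_{x_2}g\leq 1$ lies in the image of $d_2$, whereas you attempt to prove this from scratch, and your argument is incomplete.

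Concretely, your coefficient analysis is correct and does dispose of the ranges $N=0$, $1\leq N\leq m_1-1$ and $m_1\leq N\leq 2m_1-1$, but for $N\geq 2m_1$ you only sketch a weighted-filtration descent and explicitly flag it as unfinished (``should force $g=0$'', ``the delicate point''). As stated the descent does not close: after peeling off the leading term $cf_0^{s}$ of $r$, the new equation is $d_2(r-cf_0^{s})=g-c\,s\,m_1(m_2+1)f_0^{s-1}x_1^{m_1-1}$, whose right-hand side is no longer a polynomial in $x_2$ of degree at most $1$ (it now involves $x_1$), so the inductive hypothesis you would need is strictly stronger than the statement you are proving, and you have neither formulated nor established it; you also use without proof that $\ker\delta=k[f_0]$. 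Until the case $N\geq 2m_1$ is actually carried out (or condition (i) is imported from \cite[Proposition 2.3]{MMS25}, as the paper does), the proof of the corollary is not complete.
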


    \begin{proof}
        From \cite[Proposition 2.3]{MMS25}, we have that the condition (i) of \Cref{isotropy-for-2-var} is satisfied for $d_2$ for $l=1$. 

        We now prove that assumption $(ii)$ of \Cref{isotropy-for-2-var} is also satisfied. We claim that the matrix $A_{d_2}= \begin{pmatrix} 1 & m_2\\ m_1 & 0\end{pmatrix}$ corresponding to $d_2$ satisfies the condition $(\ast)$, as in \Cref{$*$}.
        Consider $(A_{d_2}-I)Y\leq 0$ where $I$ is the $2\times 2$ identity matrix and $Y= \begin{pmatrix} t_1 \\ t_2\end{pmatrix}$ where $t_1, t_2\geq 0$. Then we have 
        \begin{equation}
        \label{2.12.1} m_2t_2\leq 0, 
        \end{equation}
        \begin{equation}
        \label{2.12.2}m_1t_1-t_2\leq 0.
        \end{equation} 
        The Equations (\ref{2.12.1}) and (\ref{2.12.2}) imply that $t_1=0$ and $t_2=0$. Thus, the claim holds.

        The derivation $d_2$ is simple by \cite[Theorem 2.6]{ap23}. Now, the result follows from \Cref{isotropy-for-2-var}.
    \end{proof}

    We also obtain the following result proved in \cite[Theorem 3.4]{Y24}. 

    \begin{cor}\label{isotropy-new-example-2}
        Let $m\geq 2$ be a positive integer. Let $d_2= ({x_1}^{m}x_2+g)\partial_{x_1}+ x_1^{m-1}\partial_{x_2}$ such that $g\in k[x_1]$ with $\deg_{x_1}g\leq m$ and gcd$(x_1,g)$$=1$. 
        
        Let $n>2$ and let $d_n:=d_2+g_2(x_2)\partial_{x_{i+1}}+\dots+g_{n-1}(x_{n-1})\partial_{x_n}$,
        where $g_j(x_j)\in k[x_j]\backslash k$ for all $2\leq j\leq n-1$.
        Then $d_n$ is simple and $Aut(R_n)_{d_n}=\{(\ x_1,x_2,\dots,x_n+c)|c\in k\}$. 
    \end{cor}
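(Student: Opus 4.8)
The plan is to verify the hypotheses of \Cref{isotropy-for-2-var}, taking $i=2$ and $l=\infty$, to invoke the simplicity of $d_2$, and then to quote the conclusion of that corollary verbatim. Concretely, three things must be checked: condition (ii), that some matrix $A_{d_2}$ satisfies $(\ast)$; condition (i) with $l=\infty$, that the image of $d_2$ meets $k[x_2]$ only in $0$; and that $d_2$ is simple.

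Condition (ii) is a short computation. Since $\deg_{x_1}g\leq m$, the top total-degree monomial of $d_2(x_1)=x_1^m x_2+g$ is $x_1^m x_2$, while $d_2(x_2)=x_1^{m-1}$; reading off the exponents I take
\[
A_{d_2}=\begin{pmatrix} m & 1\\ m-1 & 0\end{pmatrix}.
\]
Writing $Y=\begin{pmatrix} t_1\\ t_2\end{pmatrix}$ with $t_1,t_2\geq 0$, the inequalities $(A_{d_2}-I)Y\leq 0$ read $(m-1)t_1+t_2\leq 0$ and $(m-1)t_1-t_2\leq 0$; their sum $2(m-1)t_1\leq 0$ forces $t_1=0$ (as $m\geq 2$), and then $t_2=0$. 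Hence $A_{d_2}$ satisfies $(\ast)$.

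The main work is condition (i): if $d_2(r)=\phi(x_2)$ with $r\in R_2$ and $\phi\in k[x_2]$, then $r\in k$ and $\phi=0$. I would prove this by a descent on $N:=\deg_{x_2}r$. Writing $r=\sum_{j=0}^{N}a_j(x_1)x_2^j$ with $a_N\neq 0$, one has
\[
d_2(r)=\sum_{j} a_j'(x_1)\,x_1^m\,x_2^{j+1}+\sum_j a_j'(x_1)\,g\,x_2^{j}+\sum_j j\,a_j(x_1)\,x_1^{m-1}\,x_2^{j-1}.
\]
The coefficient of $x_2^{N+1}$ equals $a_N'x_1^m$, which lies in $k$ only if $a_N'=0$; hence $a_N\in k^{\ast}$. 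Using $a_N'=0$, the coefficient of $x_2^{N}$ is $a_{N-1}'x_1^m\in k$, so $a_{N-1}'=0$. Then, for $N\geq 1$, the coefficient of $x_2^{N-1}$ equals $x_1^{m-1}\big(a_{N-2}'x_1+N a_N\big)$ (with the convention $a_{-1}=0$); since $m-1\geq 1$, a nonzero constant cannot be divisible by $x_1^{m-1}$, so this coefficient must vanish, forcing $a_{N-2}'x_1+N a_N=0$. But its constant term $N a_N$ is nonzero (as $k$ has characteristic zero and $a_N\neq 0$), a contradiction. Hence $N=0$, so $r=a_0(x_1)$, and then $d_2(r)=a_0'x_1^m x_2+a_0'g\in k[x_2]$ forces $a_0'=0$; thus $r\in k$ and $\phi=0$. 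I expect this descent to be the only real obstacle, the subtle point being precisely the divisibility observation that a nonzero constant cannot be divisible by $x_1^{m-1}$ when $m-1\geq 1$. (It is worth noting that neither $\gcd(x_1,g)=1$ nor $\deg_{x_1}g\leq m$ is needed for condition (i); those hypotheses enter only through the simplicity of $d_2$.)

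Finally, $d_2$ is simple by \cite{Y24}. With conditions (i) (for $l=\infty$) and (ii) verified and $d_2$ simple, \Cref{isotropy-for-2-var} applies directly and yields that $d_n$ is simple and that $Aut(R_n)_{d_n}=\{(x_1,x_2,\dots,x_n+c)\mid c\in k\}$, which is the assertion of the corollary, thereby recovering \cite[Theorem 3.4]{Y24}.
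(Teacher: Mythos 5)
Your proposal is correct and follows the same skeleton as the paper's proof: verify hypotheses (i) and (ii) of \Cref{isotropy-for-2-var} for $l=\infty$, note that $d_2$ is simple, and conclude. The matrix computation for condition (ii) is identical to the paper's (the paper in fact observes that the single inequality $(m-1)t_1+t_2\leq 0$ with $t_1,t_2\geq 0$ and $m\geq 2$ already forces $t_1=t_2=0$, but your summing argument is equally valid). The one genuine difference is condition (i): the paper simply cites \cite[Lemma 3.2]{k12}, whereas you prove it from scratch by comparing coefficients of powers of $x_2$, and your argument checks out --- the coefficient of $x_2^{N+1}$ forces $a_N\in k^{\ast}$, the coefficient of $x_2^{N}$ forces $a_{N-1}'=0$, and the coefficient of $x_2^{N-1}$, namely $x_1^{m-1}(a_{N-2}'x_1+Na_N)$, cannot lie in $k$ because its vanishing would kill the nonzero constant term $Na_N$ (characteristic zero). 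This makes your write-up self-contained where the paper is not, at the cost of a page of computation. Two small quibbles: your parenthetical claim that $\deg_{x_1}g\leq m$ enters \emph{only} through simplicity is not quite right --- you yourself use it to identify $x_1^mx_2$ as the top total-degree monomial of $d_2(x_1)$, and without it the matrix $A_{d_2}$ (hence condition (ii)) could fail; and for the simplicity of $d_2$ the paper cites \cite[Theorem 4.1]{k14} rather than \cite{Y24}, which is the cleaner reference since \cite[Theorem 3.4]{Y24} is precisely the statement being recovered.
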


    \begin{proof}
        From \cite[Lemma 3.2]{k12}, we get that the condition (i) of \Cref{isotropy-for-2-var} is satisfied for $d_2$ for $l=\infty$.

        We now prove that assumption $(ii)$ of \Cref{isotropy-for-2-var} is also satisfied. We claim that the matrix $A_{d_2}= \begin{pmatrix} m & 1\\ m-1 & 0\end{pmatrix}$ corresponding to $d_2$ satisfies the condition $(\ast)$, as in \Cref{$*$}.
        Consider $(A_{d_2}-I)Y\leq 0$ where $I$ is the $2\times 2$ identity matrix and $Y= \begin{pmatrix} t_1 \\ t_2\end{pmatrix}$ where $t_1, t_2\geq 0$. 

        Then we have 
        \begin{equation}
        \label{2.12.3} (m-1)t_1+t_2\leq 0, 
        \end{equation}
        \begin{equation}
        \label{2.12.4}(m-1)t_1-t_2\leq 0.
        \end{equation} 
        The \Cref{2.12.3} imply that $t_1=0$ and $t_2=0$.

Also, the derivation $d_2$ is simple by \cite[Theorem 4.1]{k14}. Thus by \Cref{isotropy-for-2-var}, $d_n$ is simple and $Aut(R_n)_{d_n}=\{(\ x_1,x_2,\dots,x_n+c)|c\in k\}$.
    \end{proof}

  
\section{The derivation $y^{m_1}\partial_x+(1+y^{m_2}x)\partial_y$}

Let $k$ be a field of characteristic zero. In this section, we provide a class of simple derivations $d_2$ on $R_2=k[x,y]$ which satisfy the condition $(i)$ of \Cref{thm-isotropy} for $l=\infty$. 

Let $m_1$ and $m_2$ be positive integers such that $m_2$ does not divide $m_1$. Let $d:=y^{m_1}\partial_x+(1+y^{m_2}x)\partial_y$ denote the $k$-derivation of the two variables polynomial ring $k[x,y]$. 

For any $h(y)\in k[y]$, we denote $\partial_yh(y)$ by $h'(y)$.

\begin{lem}\label{lemma1}
Let $d$ be the derivation of $R=k[x,y]$, as defined above. Let $f \in R$ be such that $d(f)=g(x)$, where $g(x)\in k[x]$. Then $f\in k$ and $g(x)=0$.
\end{lem}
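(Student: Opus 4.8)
The plan is to expand $f$ as a polynomial in $x$ over $k[y]$, say $f=\sum_{i=0}^{D} b_i(y)\,x^i$ with $b_D\neq 0$, and to derive a contradiction from the lowest-degree-in-$y$ behaviour of $d(f)=g(x)$. First I would dispose of $D=0$: there $d(f)=(1+xy^{m_2})b_0'(y)$, so the coefficient of $x^1$ is $y^{m_2}b_0'(y)$, which can equal the constant $g_1$ only if $b_0'=0$; hence $f\in k$ and $g=0$. I also note that $m_2\nmid m_1$ forces $m_2\geq 2$, since $1$ divides $m_1$. So from now on I assume $D\geq 1$.

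Next I would record the basic relations. With $g=\sum_c g_c x^c$ and $g_c\in k$, comparing coefficients of $x^c$ in $d(f)=g(x)$ gives, for every $c$,
\[
(c+1)\,y^{m_1}b_{c+1}(y)+b_c'(y)+y^{m_2}b_{c-1}'(y)=g_c,
\]
with the convention $b_i=0$ for $i<0$ or $i>D$. Evaluating this relation at $c=D+1$ and $c=D$ and using $m_2\geq 1$ forces $b_D\in k^{\ast}$ and $b_{D-1}\in k$ (together with $g_{D+1}=g_D=0$). If $D=1$ this already finishes, for the relation at $c=0$ then reads $b_1 y^{m_1}=g_0$, which is impossible since $b_1\in k^{\ast}$ and $m_1\geq 1$. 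Hence I may assume $D\geq 2$.

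The core is a downward cascade in $c$ tracking the minimal $y$-degree. Write $m_1=qm_2+s$ with $0\leq s<m_2$; since $m_2\nmid m_1$ we have $1\leq s\leq m_2-1$. I would prove by induction that $b_{D-t}'(y)$ carries a nonzero term of minimal degree exactly $m_1-(t-1)m_2$, for $2\leq t\leq q+1$ and as long as $D-t\geq 0$. When $q\geq 1$, the relation at $c=D-1$ seeds this: using $b_{D-1}'=0$ and $b_D\in k^{\ast}$ it becomes $Db_D\,y^{m_1}+y^{m_2}b_{D-2}'(y)=g_{D-1}$, forcing $g_{D-1}=0$ and $b_{D-2}'(y)=-Db_D\,y^{m_1-m_2}$, of minimal degree $m_1-m_2\geq 0$. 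For the inductive step, in the relation at $c=D-t$ the term $(D-t+1)y^{m_1}b_{D-t+1}(y)$ has all its $y$-degrees $\geq m_1$ and so cannot cancel the minimal degree $m_1-(t-1)m_2<m_1$ carried by $b_{D-t}'$; thus $y^{m_2}b_{D-t-1}'(y)$ must reproduce that minimal term, which forces $g_{D-t}=0$ and lowers the minimal degree of $b_{D-t-1}'$ to $m_1-tm_2$. The accompanying scalar is a product of nonzero integers and $b_D$, so the term genuinely survives.

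The contradiction, and the exact point where $m_2\nmid m_1$ is used, occurs at $t=q+1$, where the minimal degree becomes $m_1-qm_2=s$ with $1\leq s\leq m_2-1$. At that stage the relation at $c=D-q-1$ requires $y^{m_2}b_{D-q-2}'(y)$ to carry a nonzero term of degree $s$; but every term of $y^{m_2}b_{D-q-2}'(y)$ has degree $\geq m_2>s$, while the only degree-$<m_2$ contribution available on the other side is the constant $g_{D-q-1}$, which cannot match a term of positive degree $s$. When $q=0$, i.e. $m_2>m_1$, the seeding relation at $c=D-1$ already exhibits this forbidden term of degree $m_1=s$; and if the indices run out first, i.e. $D\leq q$, the bottom relation at $c=0$ displays the same uncancelled positive-degree term against the constant $g_0$. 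Had $m_2\mid m_1$, one would instead get $s=0$, the offending term would be a constant, and it would be harmlessly absorbed into $g$; this is precisely why the hypothesis is required. I expect the main technical obstacle to be the bookkeeping in the inductive step — specifically, verifying that the integration constants arising from each antiderivative only ever produce terms of degree $\geq m_1$, and hence never interfere with the descending chain of minimal-degree terms.
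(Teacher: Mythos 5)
Your proposal is correct and follows essentially the same route as the paper's proof: expand $f$ in powers of $x$, force the two top coefficients into $k$, and run the same descending cascade in which the minimal $y$-degree of the successive $b_c'$ drops by $m_2$ at each step until it lands strictly between $0$ and $m_2$ (or the indices run out at the $c=0$ relation), which is impossible precisely because $m_2\nmid m_1$. The only cosmetic difference is that you phrase the terminal contradiction via $m_1=qm_2+s$ with $1\leq s\leq m_2-1$, whereas the paper states it as the existence of $j$ with $jm_2>m_1>(j-1)m_2$.
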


\begin{proof}
Let $g(x)\in k[x]$. Suppose $f(x,y)\in k[x,y]$ be such that $d(f)=g(x)$. If $f=0$, we are done. Thus we assume that $f\neq 0$ and write $f=\sum_{i=0}^{t}f_ix^i$, where $f_i\in k[y]$, for $0\leq i\leq t$, and $f_t\neq 0$. 
We will prove that for $t=0$, $f\in k$, and for $t\geq 1$ there does not exist any such $f$.  
        
If $t=0$ then $f\in k[y]$. So, we have that $f=f_0(y)$, where $f_0(y)\in k[y]$. Now, $d(f_0(y))=g(x)$ implies $(y^{m_2}x+1)f_0'=g(x)$. Thus, $f_0'(y)=0$. Hence, $f=f_0\in k$ and $g(x)=d_n(f_0)=0$.
        
Next, we assume that $t\geq 1$ and $g(x)=b_lx^l+b_{l-1}x^{l-1}+\dots+b_1x+b_0$, where $b_i\in k$ for all $i=1,\ldots,l$ and $b_l\neq 0$. From $d(f)=g(x)$ it follows that
\begin{equation}
\label{maineqn}
\sum_{i=1}^{t}if_iy^{m_1}x^{i-1}+\sum_{i=0}^{t}y^{m_2}f'_ix^{i+1}+\sum_{i=0}^{t}f'_ix^i= b_lx^l+b_{l-1}x^{l-1}+\dots+b_1x+b_0.
\end{equation}
Thus $t+1\geq l$. Let $t+1=l$. From \Cref{maineqn} it follows that $y^{m_2}f_t'=b_l\in k^{\ast}$, which is absurd. Next, suppose that $t=l$. From \Cref{maineqn}, it follows that $f'_t=0$ and $y^{m_2}f'_{t-1}(y)+f'_t(y)=b_l$. Thence $y^{m_2}f'_{t-1}(y)=b_l$, which is not possible. Hence $t+1\geq l+2$, i.e., $t\geq l+1$. 
        
Since $t-1\geq l$, i.e., $l\in \{t-1,t-2,\dots,1,0\}$, we rewrite $g(x)=b_{t-1}x^{t-1}+b_{t-2}x^{t-2}+\dots+b_1x+b_0$, where $b_i\in k $ for $0\leq i\leq t-1$. Similarly, rewriting \Cref{maineqn} we have
	\begin{equation}\label{maineqn2}
		\sum_{i=1}^{t}if_iy^{m_1}x^{i-1}+\sum_{i=0}^{t}y^{m_2}f'_ix^{i+1}+\sum_{i=0}^{t}f'_ix^i=b_{t-1}x^{t-1}+b_{t-2}x^{t-2}+\dots+b_1x+b_0.
	\end{equation}
Note that the above equation also holds for $g=0$.    
Now, comparing the coefficients of $x^i$ for $i =t+1,\ldots ,0$ in \Cref{maineqn2}, we get the following equations
		\begin{align}
			\label{eqn1}	y^{m_2}f'_t(y)&=0,\\
			\label{eqn2}	y^{m_2}f'_{t-1}(y)+f'_t(y)&=0,\\
			\label{eqn3}  ty^{m_1}f_t(y)+y^{m_2}f'_{t-2}(y)+f'_{t-1}(y)&=b_{t-1},\\
			\hspace{3cm} & \vdots  \nonumber
			\\
			     \label{eqn4} 3y^{m_1}f_3(y)+y^{m_2}f'_{1}(y)+f'_{2}(y)&=b_2,\\
			   \label{eqn5} 2y^{m_1}f_2(y)+y^{m_2}f'_{0}(y)+f'_{1}(y)&=b_1,\\
			  \label{eqn6} y^{m_1}f_1(y)+f'_{0}(y)&=b_0,
		\end{align} respectively. 
Since $t\geq 1$, we always have Equations (\ref{eqn1}) and (\ref{eqn2}), then it follows that $f_t\in k^{\ast}$ and $f_{t-1}\in k$. Let us denote 
    \begin{align}
         \lambda_t:=f_t \in k^{\ast} \,\,\,\,\, \text{and} \,\,\,\,\, \lambda_{t-1}:=f_{t-1}\in k.
    \end{align}
   Suppose $t=1$. Then from \Cref{eqn6}, we get that $f_1=0$, which is a contradiction. Hence, $t\neq 1$. Hence $t\geq 2$. 

Now, from \Cref{eqn3}, we get that
		\begin{align*}
			y^{m_2}f'_{t-2}(y)=b_{t-1}-t\lambda_ty^{m_1},
		\end{align*}which implies that $b_{t-1}=0$. Thus
	\begin{align}\label{Eqn1}
		y^{m_2}f'_{t-2}(y)=-t\lambda_ty^{m_1}
	\end{align}
    
		Suppose $m_1<m_2$ then it follows from \Cref{Eqn1} that $y$ divides $\lambda_{t}$, which is a contradiction. Hence, from now on, we consider that $m_1\geq m_2$.
        
		Since $m_2 \nmid m_1$, we have $m_1>m_2$. Then the \Cref{Eqn1} implies that
		\begin{align}\label{Eqn1'}
			f'_{t-2}(y)=-t\lambda_ty^{m_1-m_2}.
		\end{align}
        
       Next, on comparing coefficient of $x^{t-2}$ from \Cref{maineqn2}, we get that
		\begin{align}\label{eqn(t-2)}
		(t-1)y^{m_1}f_{t-1}(y)+y^{m_2}f'_{t-3}(y)+f'_{t-2}(y)=b_{t-2}.
	\end{align} Substituting $f'_{t-2}$, from \Cref{Eqn1'} in \Cref{eqn(t-2)}, we have
		\begin{align*}
			y^{m_2}f'_{t-3}(y)=b_{t-2}-(t-1)\lambda_{t-1}y^{m_1}+t\lambda_ty^{m_1-m_2},
		\end{align*}which further implies that $b_{t-2}=0$. Thus \Cref{eqn(t-2)} becomes
	\begin{align}\label{Eqn2'}
		y^{m_2}f'_{t-3}(y)=-(t-1)\lambda_{t-1}y^{m_1}+t\lambda_ty^{m_1-m_2}.
	\end{align} Suppose that $m_2<m_1<2m_2$. Then from \Cref{Eqn2'} we get that $y$ divides $\lambda_t$, which is a contradiction. If $m_1>2m_2$, then \Cref{Eqn2'} becomes
    \begin{align}
       f'_{t-3}(y)=-(t-1)\lambda_{t-1}y^{m_1-m_2}+t\lambda_ty^{m_1-2m_2}.
    \end{align}


Next, we claim the following.

\textbf{Claim:} Suppose $m_1>(j-1)m_2$ for some $2\leq j \leq t-1$, then we have 
		\begin{align}\label{claim-old}
        y^{m_2}f'_{t-(j+1)}=\alpha_{j0}(y)y^{m_1}+\alpha_{j1}(y)y^{m_1-m_2}+\dots+\alpha_{j(j-1)}(y)y^{m_1-(j-1)m_2},
		\end{align} 

        
        where $\alpha_{ji}(y)\in k[y]$ for $0\leq i\leq j-2$ and $\alpha_{j(j-1)}(y)\in k^{\ast}$.
        
        \vspace{2mm}
        \textbf{Proof of Claim:} Suppose $j=2$, i.e., $m_1>m_2$ , then as in \Cref{Eqn2'} we have that $y^{m_2}f'_{t-3}(y)=-(t-1)\lambda_{t-1}y^{m_1}+t\lambda_ty^{m_1-m_2}$, where $a_{20}:=-(t-1)\lambda_{t-1}\in k[y]$ and $a_{21}:= t\lambda_t\in k^{\ast}$. 
		Thus, the claim holds for $j=2$.


        Next, suppose for some $2\leq s-1\leq t-2$ the claim holds.  

        Now, assume that $m_1>(s-1)m_2$. From \Cref{maineqn2}, equating the coefficient of $x^{t-s}$, we get that

        
        \begin{align}\label{eqn7}
            (t-(s-1))y^{m_1}f_{t-(s-1)}(y)+y^{m_2}f'_{t-(s+1)}(y)+f'_{t-s}(y)=b_{t-s}. 
        \end{align}

       Note that $m_1>(s-1)m_2>(s-2)m_2$. By the induction hypothesis, we have

       \begin{multline}\label{eqn8-old}
            y^{m_2}f'_{t-s}(y)=\alpha_{(s-1)0}(y)y^{m_1}+\alpha_{(s-1)1}(y)y^{m_1-m_2}+\dots \\\dots+\alpha_{(s-1)(s-2)}(y)y^{m_1-(s-2)m_2},
        \end{multline}
        where $\alpha_{(s-1)i}(y)\in k[y]$ for $0\leq i\leq s-3$ and $\alpha_{(s-1)(s-2)}(y)\in k^{\ast}$.


        Now, since $m_1>(s-1)m_2$, we can write \Cref{eqn8-old} as 
        
         \begin{multline}\label{eqn9-old}
            f'_{t-s}=\alpha_{(s-1)0}(y)y^{m_1-m_2}+\alpha_{(s-1)1}(y)y^{m_1-2m_2}+\dots\\ \dots+\alpha_{(s-1)(s-2)}(y)y^{m_1-(s-1)m_2}.
        \end{multline}

        

        Substituting the value of $f'_{t-s}(y)$ from \Cref{eqn9-old} in \Cref{eqn7}, we get that 

        \begin{multline}\label{eqn10-old}
             y^{m_2}f'_{t-(s+1)}(y)=b_{t-s}-(t-(s-1))y^{m_1}f_{t-(s-1)}(y)\\
             -(\alpha_{(s-1)0}(y)y^{m_1-m_2}+\alpha_{(s-1)1}(y)y^{m_1-2m_2}+\dots+\alpha_{(s-1)(s-2)}(y)y^{m_1-(s-1)m_2}).
        \end{multline}



        Since $m_1>(s-1)m_2$, it follows that $b_{t-s}=0$.
        Thus we can write \Cref{eqn10-old} as
        \begin{align}\label{eqn11-old}
            y^{m_2}f'_{t-(s+1)}=\alpha_{s0}(y)y^{m_1}+\alpha_{s1}(y)y^{m_1-m_2}+\dots+\alpha_{s(s-1)}(y)y^{m_1-(s-1)m_2},
        \end{align}
        where $\alpha_{s0}(y)=-(t-(s-1))f_{t-(s-1)}(y)$ and $\alpha_{si}(y)=-\alpha_{(s-1)(i-1)}(y)$ for all $1\leq i\leq s-1$.


        Note that $\alpha_{si}(y)\in k[y]$ for $0\leq i\leq s-2$ and $\alpha_{s(s-1)}(y)\in k^{\ast}$. Hence, the claim follows for $s$.   \qed
        

        Thus, if $jm_2>m_1>(j-1)m_2$ for some $2\leq j\leq t-1$, then from \Cref{claim-old} it follows that $y$ divides $\alpha_{j(j-1)}\in k^{\ast}$, which  is a contradiction. Hence, we get that $m_1>(t-1)m_2$. Now from \Cref{eqn6} we have 
        \begin{align}\label{eqn12}
            y^{m_1}f_1(y)+f'_0(y)=b_0. 
        \end{align}
        Also, as $m_1>(t-1)m_2>(t-2)m_2$, from \Cref{claim-old} we have 
        \begin{multline}\label{eqn13}
            f'_0(y)=\alpha_{(t-1)0}(y)y^{m_1-m_2}+\alpha_{(t-1)1}(y)y^{m_1-2m_2}+\dots\\\ldots+\alpha_{(t-1)(t-2)}(y)y^{m_1-(t-1)m_2},
        \end{multline}
        where $\alpha_{(t-1)(t-2)}\in k^{\ast}$. 

        Since $y$ divides $f'_0$ in \Cref{eqn13}, it follows from \Cref{eqn12} that $b_0=0$. Furthermore, substituting \Cref{eqn13} in \Cref{eqn12}, we get that 
        \begin{multline}
            y^{m_1}f_1(y)=-\alpha_{(t-1)0}(y)y^{m_1-m_2}-\alpha_{(t-1)1}(y)y^{m_1-2m_2}-\dots\\\ldots-\alpha_{(t-1)(t-2)}(y)y^{m_1-(t-1)m_2}.
        \end{multline}
        
        Then $y$ divides $\alpha_{(t-1)(t-2)}\in k^{\ast}$, which is a contradiction. Thus, we finally get that $t< 1$. This completes the proof.  
	\end{proof}

	\begin{remark}
		When $m_1=m_2$, $d(r)=1$ for $r=-\frac{x^2}{2}+y$. Hence, the above lemma does not hold for the case when $m_1=m_2$.
	\end{remark}

\section*{Declaration of interests} The authors declare that they have no known competing financial interests or personal relationships that could have appeared to influence the work reported in this paper.

\section*{Acknowledgements} The first-named author would like to thank the Department of Science and Technology of India for the INSPIRE Faculty fellowship grant IFA23-MA 197 and IIT Indore for the Young Faculty Seed Grant IITIIYFRSG 2024-25/Phase-V/04R. The second author would like to thank IIT Indore for the Young Faculty Research Grant (No. IITIIYFRSG 2024-25/Phase-VII/05). The third-named author is financially supported by the SRF grant from CSIR India, Sr. No. 09/1022(16075)/2022-EMR-I.


\begin{thebibliography}{00} 
\bibitem{Bal21}Baltazar, R., Veloso, M. (2021). On isotropy group of Danielewski surfaces. \emph{Comm. Algebra} 49(3): 1006-1016.
\bibitem{BN} Bertoncello, L.N. , D. Levcovitz (2020). On the isotropy group of a simple derivation. \emph{J. Pure Appl. Algebra }.224(1):33-41.
     \bibitem{c07}Coutinho, S. C. (2007). Nonholonomic simple D-modules from simple derivations. \emph{Glasg. Math. J}. 49(1):11-21.
	\bibitem{g09}Gavran, V. S. (2009). Simple derivations of higher degree in two variables. \emph{Ukra\"{\i}n. Mat. Zh}. 61(4):568-571.
	\bibitem{g89}Goodearl, K. R., Warfield Jr., R. B. (1989). An introduction to noncommutative Noetherian rings. \emph{London Mathematical Society Student Texts vol. 16. Cambridge University Press.}
    \bibitem{HY23} Huang, Y., Yan, D. (2023). Simple derivations and the isotropy groups. \emph{Comm. Algebra} 52(3):1097-1112.
	\bibitem{j81}Jordan, D. A. (1981). Differentially simple rings with no invertible derivatives. \emph{Quart. J. Math. Oxford Ser. (2)} 32(128):417-424. 
    \bibitem{k12}Kour, S. (2012). Some simple derivations of $k[x,y]$. \emph{Comm. Algebra}, 40(11):4100-4110.
	\bibitem{k13}Kour, S., Maloo, A. K. (2013). Simplicity of some derivations of $k[x,y]$. \emph{Comm. Algebra} 41(4):1417-1431.

   \bibitem{k14} Kour, S. (2014). A Class of Simple Derivations of $k[x, y].$ \emph{Comm. Algebra}, 42(9): 4066–4083. 
   \bibitem{l08}Lequain, Y. (2008). Simple Shamsuddin derivations of $k[X;Y_1,Y_2,\dots,Y_n]$: An algorithmic characterization. \emph{J. Pure Appl. Algebra} 212(4):801-807.
	\bibitem{MP17}Mendes, L. G., Pan, I. (2017). On plane polynomial automorphisms commuting with simple derivations. \emph{Journal of Pure and Applied Algebra}, 221(4), 875-882.
	\bibitem{m01}Maciejewski, A., Moulin-Ollagnier, J., Nowicki, A. (2001). Simple quadratic derivations in two variables. \emph{Comm. Algebra} 29(11):5095-5113.
    \bibitem{MMS25}Mishra, S. C., Mondal, D., Shukla, P. (2025). Classes of simple derivations of polynomial rings $k[x_1,\ldots,x_n]$. Preprint. 
     \bibitem{MPR24} Montagard, P-L, Pan, I., Rittatore, A. (2024). On polynomial automorphisms commuting with a simple derivation. Preprint.
    
	\bibitem{n94}Nowicki, A. (1994). Polynomial derivations and their rings of constants. \emph{Uniwersytet Mikolaja Kopernika, Toru\'{n}.}
	\bibitem{n08}Nowicki, A. (2008). An example of a simple derivation in two variables. \emph{Colloq. Math.} 113(1):25-31.
    \bibitem{sam} Shamsuddin, A. (1977). Ph.D. thesis, University of Leeds.
    \bibitem{ap23} Shukla, P., Parkash, A. (2024). Simple derivations in two variables. \emph{Comm. Algebra}, 1-8.
	\bibitem{y19}Yan, D. (2019). Simple derivations in two variables. \emph{Comm. Algebra} 47(9):3881-3888.
	\bibitem{Y22}Yan, D. (2022). A note on isotropy groups and simple derivations. \emph{Comm. Algebra}, 50(7), 2831-2839.
    \bibitem{Y24}Yan, D. (2022). Simple derivations and the isotropy groups. \emph{Comm. Algebra}, 52(3), 1097-1112.
    \bibitem{YD24}Yan, D. (2024). On Shamsuddin derivations and the isotropy groups. \emph{J. Algebra }.637, 243-252.
\end{thebibliography}
\end{document}